\def\co{\colon\thinspace}
\newcommand{\sP}{\mathcal{P}}
\newcommand{\sS}{\mathcal{S}}
\newcommand{\sQ}{\mathcal{Q}}
\newcommand{\bN}{\mathbb{N}}
\newcommand{\bZ}{\mathbb{Z}}
\newcommand{\bQ}{\mathbb{Q}}
\newcommand{\pq}{\frac{p}{q}}
\newcommand{\mC}{\mu_C}
\newcommand{\lC}{\lambda_C}
\newcommand{\HFhat}{\widehat{\operatorname{HF}}}
\newcommand{\rk}{\operatorname{rk}}
\newtheorem{theorem}{Theorem}%[section]
\newtheorem{definition}[theorem]{Definition}
\newtheorem{question}[theorem]{Question}
\newtheorem{proposition}[theorem]{Proposition}
\newtheorem{remark}[theorem]{Remark}
\newtheorem{lemma}[theorem]{Lemma}
\title[Cabled knots, Dehn surgery, and left-orderability]{On cabled knots, Dehn surgery, and left-orderable fundamental groups}
\date{March 11, 2011}
\author[Adam Clay]{Adam Clay}
\address{
CIRGET, 
Universit\'e du Qu\'ebec \`a Montr\'eal, 
Case postale 8888, Succursale centre-ville, 
Montr\'eal QC, H3C 3P8.}
\email{aclay@cirget.ca}
\author[Liam Watson]{Liam Watson}
\thanks{Both authors partially supported by NSERC postdoctoral fellowships}
\address{Department of Mathematics, UCLA, 520 Portola Plaza, Los Angeles CA, 90095.}
\email{lwatson@math.ucla.edu}
\begin{document}

\begin{abstract}
Previous work of the authors establishes a criterion on the fundamental group of a knot complement that determines when Dehn surgery on the knot will have a fundamental group that is not left-orderable \cite{CW2010}. We provide a refinement of this criterion by introducing the notion of a {\em decayed} knot; it is shown that Dehn surgery on decayed knots produces surgery manifolds that have non-left-orderable fundamental group for all sufficiently positive surgeries. As an application, we prove that sufficiently positive cables of decayed knots are always decayed knots.  These results mirror properties of L-space surgeries in the context of Heegaard Floer homology.
\end{abstract}

\maketitle

% introduction.tex

\begin{definition}
A group $G$ is left-orderable if there exists a partition of the group elements \[G= \sP\sqcup\{1\}\sqcup\sP^{-1}\] satisfying $\sP\cdot\sP\subseteq\sP$ and  $\sP\ne\emptyset$. The subset $\sP$ is called a positive cone. 
\end{definition}

This is equivalent to $G$ admitting a left-invariant strict total ordering.  For background on left-orderable groups relevant to this paper see \cite{BRW2005,CW2010}; a standard reference for the theory of left-orderable groups is \cite{KME96}.  As established by Boyer, Rolfsen and Wiest \cite{BRW2005} (compare \cite{HS1985}), the fundamental group $\pi_1(K)$ of the complement of a knot $K$ in $S^3$ is always left-orderable. Indeed, this follows from the fact that any compact, connected, irreducible, orientable $3$-manifold with positive first Betti number has left-orderable fundamental group \cite[Theorem 1.1]{BRW2005}.  However, the question of left-orderability for fundamental groups of rational homology $3$-spheres is considerably more subtle (see \cite{BRW2005,CW2010}) and seems closely tied to certain codimension one structures on the $3$-manifold (see \cite{BRW2005, CD2003, RSS2003}). Continuing along the lines of \cite{CW2010} this paper focuses on Dehn surgery, an operation on knots that produces rational homology $3$-spheres.   We recall this construction in order to fix notation and conventions. 

For any knot $K$ in $S^3$ there is a preferred generating set for the peripheral subgroup $\bZ\oplus\bZ\subset\pi_1(K)$ provided by the knot meridian $\mu$ and the Seifert longitude $\lambda$. The latter is uniquely determined (up to orientation) by the existence of a Seifert surface for $K$. We orient $\mu$ so that it links positively with $K$, and orient $\lambda$ so that $\mu\cdot\lambda=1$. For any  rational number $r$ with reduced form $\pq$ we denote the peripheral element $\mu^p\lambda^q$ by $\alpha_r$. At the level of the fundamental group, the result of Dehn surgery along $\alpha_r$ is summarized by the short exact sequence \[1\to\langle\langle \alpha_r\rangle\rangle\to \pi_1(K) \to \pi_1(S^3_r(K)) \to 1.\] Here $\langle\langle \alpha_r\rangle\rangle$ denotes the normal closure of $\alpha_r$, and $S^3_r(K)$ is the $3$-manifold obtained by attaching a solid torus to the boundary of $S^3 \smallsetminus \nu(K)$, sending the meridian of the torus to a simple closed curve representing the class 
\[[\alpha_r]\in H_1(\partial(S^3\smallsetminus\nu(K));\bZ)/\{\pm1\}.\]
 We will blur the distinction between $\alpha_r$ as an element of the fundamental group or as a primitive class in the (projective) first homology of the boundary, and refer to these peripheral elements as slopes. 

While many examples of rational homology 3-spheres have left-orderable fundamental group \cite{BRW2005}, there exist infinite families of knots for which sufficiently positive Dehn surgery (that is, along a slope parametrized by a suitable large rational number) yields a manifold with non-left-orderable fundamental group  \cite{CW2010}. To make this precise, consider the set of slopes \[\sS_r = \{ \alpha_{r'} | r' \ge r \}\] for some fixed rational $r$. 
\begin{definition}
A nontrivial knot $K$ in $S^3$ is called $r$-decayed if, for any positive cone $\sP$ in $\pi_1(K)$, either   $\sP\cap\sS_r=\sS_r$ or $\sP\cap\sS_r=\emptyset$. 
\end{definition}

The existence of decayed knots is established in \cite{CW2010}. For example, the torus knot $T_{p,q}$ is $(pq-1)$-decayed (for $p,q>0$), and the $(-2,3,q)$-pretzel knot is $(10+q)$-decayed for odd $q\ge5$ (see Theorem \ref{thm:decayedknots}). Our interest in this property stems from the following:

\begin{theorem}\label{thm:decay}
If $K$ is $r$-decayed then $\pi_1(S^3_{r'}(K))$ is not left-orderable for all $r'\ge r$.
\end{theorem}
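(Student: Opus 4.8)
The plan is to argue by contradiction: I will assume that $\pi_1(S^3_{r'}(K))$ carries a positive cone $\bar\sP$ for some fixed $r'\ge r$, and then manufacture two positive cones on $\pi_1(K)$ that together violate the $r$-decayed hypothesis. Write $\phi\co\pi_1(K)\to\pi_1(S^3_{r'}(K))$ for the quotient map appearing in the short exact sequence, so that $N=\langle\langle\alpha_{r'}\rangle\rangle$ is its kernel. Since $N$ is a subgroup of the left-orderable group $\pi_1(K)$, it is itself left-orderable; I fix a positive cone $\sP_N$ for $N$. The tool I would invoke is that left-orderability is preserved under extensions: stacking $\sP_N$ beneath the pullback of $\bar\sP$, i.e. setting \[ \sP=\phi^{-1}(\bar\sP)\sqcup\sP_N, \] yields a positive cone on $\pi_1(K)$. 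I would check this directly, verifying $\sP\cdot\sP\subseteq\sP$ by the four cases according to whether each factor lies in $\phi^{-1}(\bar\sP)$ or in $N$, and verifying the partition $\pi_1(K)=\sP\sqcup\{1\}\sqcup\sP^{-1}$.

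The key feature I want to exploit is that reversing the order on the kernel changes the construction in a very controlled way. Replacing $\sP_N$ by the reversed cone $\sP_N^{-1}$ produces a second positive cone $\sP^{*}=\phi^{-1}(\bar\sP)\sqcup\sP_N^{-1}$, and $\sP$ and $\sP^{*}$ agree away from $N$: any $g\notin N$ has $\phi(g)\ne1$, so $g\in\sP\iff\phi(g)\in\bar\sP\iff g\in\sP^{*}$, whereas every nontrivial element of $N$ flips sign. Now $\alpha_{r'}\in N$ is nontrivial, since the peripheral subgroup $\bZ\oplus\bZ$ injects for a nontrivial knot, so after relabelling I may assume $\alpha_{r'}\in\sP$ while $\alpha_{r'}\notin\sP^{*}$.

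Next I would feed these cones into the decay hypothesis. Applied to $\sP$: because $\alpha_{r'}\in\sS_r\cap\sP$, decay forces $\sS_r\subseteq\sP$, so \emph{every} slope $\ge r$ is positive for $\sP$. To close the argument I pick any $r''\ge r$ with $r''\ne r'$ (there are infinitely many such rationals) and feed $\alpha_{r''}$ into $\sP^{*}$: provided $\alpha_{r''}\notin N$, its sign is unchanged in passing from $\sP$ to $\sP^{*}$, so $\alpha_{r''}\in\sS_r\cap\sP^{*}\ne\emptyset$; decay applied to $\sP^{*}$ then forces $\sS_r\subseteq\sP^{*}$, giving $\alpha_{r'}\in\sP^{*}$ and contradicting $\alpha_{r'}\notin\sP^{*}$.

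The main obstacle is the one remaining claim, that $\alpha_{r'}$ is the \emph{only} slope lying in $N$; this is exactly where I expect to use that $\pi_1(S^3_{r'}(K))$, being assumed left-orderable, is torsion-free. Indeed, if two distinct slopes both lay in $N$, then $N\cap(\bZ\oplus\bZ)$ would have finite index in the peripheral subgroup, so the image of the peripheral subgroup in the torsion-free group $\pi_1(S^3_{r'}(K))$ would be finite and hence trivial; in particular $\phi(\mu)=1$, and since the meridian $\mu$ normally generates $\pi_1(K)$ this would force $\pi_1(S^3_{r'}(K))$ to be trivial, which is not left-orderable. Thus no slope other than $\alpha_{r'}$ meets $N$, the choice of $r''$ above is legitimate, and the contradiction stands.
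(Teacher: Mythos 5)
Your proof is correct, and it shares the paper's basic strategy: assume $\pi_1(S^3_{r'}(K))$ is left-orderable, and use the short exact sequence together with the standard ``extension of orderings'' construction $\sP = \phi^{-1}(\bar\sP)\sqcup\sP_N$ to build positive cones on $\pi_1(K)$ that violate $r$-decay. The execution, however, differs in two substantive ways. First, where the paper cites \cite[Proposition 20]{CW2010} for the fact that $\langle\langle\alpha_{r'}\rangle\rangle\cap\langle\mu,\lambda\rangle = \langle\alpha_{r'}\rangle$, you prove a weaker but sufficient statement from scratch --- that at most one slope can lie in the kernel --- via the chain: two non-proportional slopes in $N$ force $N\cap(\bZ\oplus\bZ)$ to have finite index, torsion-freeness of the left-orderable quotient then kills the peripheral image, so $\phi(\mu)=1$, and normal generation of $\pi_1(K)$ by the meridian collapses the quotient to the trivial group, which is excluded by the convention $\sP\neq\emptyset$. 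Second, your endgame is different: the paper adapts its cones in advance, choosing the quotient cone so that a fixed $\alpha_{s_0}$ with $s_0>r'$ is positive and the kernel cone so that $\alpha_{r'}$ is negative, which yields a \emph{single} cone $\sP$ with $\sS_r\cap\sP$ neither empty nor all of $\sS_r$ (the paper then invokes \cite[Proposition 18]{CW2010} to describe $\sP\cap\sS_r$ exactly, though the contradiction only needs the neither-empty-nor-full statement). You instead fix an \emph{arbitrary} quotient cone, run both orientations of the kernel cone, and apply the decay hypothesis twice: once to $\sP$ to propagate positivity from $\alpha_{r'}$ to a slope $\alpha_{r''}\notin N$, and once to $\sP^{*}$ to transport that positivity back and contradict $\alpha_{r'}\notin\sP^{*}$. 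The trade-off is clear: your argument is fully self-contained (it avoids both cited propositions from \cite{CW2010}) and uses only elementary facts about left-orderable groups, at the cost of a slightly less direct contradiction; the paper's adapted-cone argument uses one application of decay and produces the sharper structural conclusion about which slopes are positive.
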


As a result, it is not restrictive to assume that $r$ is a positive rational number since $\pi_1(S^3_0(K))$ is always left-orderable \cite{BRW2005}. Notice however that it is not immediately clear how Theorem  \ref{thm:decay} might be applied in practice, as there is no obvious method for checking when a knot is $r$-decayed.  For this reason, in Section \ref{sec:reformulation} we describe an equivalent formulation of $r$-decay whose statement is more technical, but easier to verify, than the definition.  Together with the proof of Theorem \ref{thm:decay}, the results of Section \ref{sec:reformulation} provide a useful refinement of the ideas from \cite{CW2010}.

Results connecting left-orderability and Dehn surgery may be expected to mirror similar results relating to L-spaces,
%due to the fact that there is no example of an L-space known to have left-orderable fundamental group 
since there is no known example of an L-space with left-orderable fundamental group, while many L-spaces have fundamental group that is not left-orderable.
(see \cite{BGW2010,CR2010,CW2010,Peters2009,Watson2009}).   Recall that an L-space is a rational homology sphere with Heegaard Floer homology that is as simple as possible, in the sense that $\rk\HFhat(Y)=|H_1(Y;\bZ)|$ (see \cite{OSz2005-lens}).  Theorem \ref{thm:decay} mirrors a fundamental property of knots admitting L-space surgeries: if $S_n^3(K)$ is an L-space, then $S^3_r(K)$ is an L-space as well for any $r\ge n$.  

%Results connecting left-orderability and Dehn surgery may be expected to take this form, due to the observation that many (indeed, all known) L-spaces have non-left-orderable fundamental group (see \cite{BGW2010,CR2010,CW2010,Peters2009,Watson2009}). Recall that an L-space is a rational homology sphere with Heegaard Floer homology that is as simple as possible, in the sense that $\rk\HFhat(Y)=|H_1(Y;\bZ)|$ (see \cite{OSz2005-lens}). Theorem \ref{thm:decay} mirrors a fundamental property of knots that admit L-space surgeries: if $S_n^3(K)$ is an L-space, then $S^3_r(K)$ is an L-space as well for any $r\ge n$.  

In the interest of further investigating left-orderability of fundamental groups of 3-manifolds along the lines of \cite{CW2010}, we consider the behaviour of Dehn surgery on cables of $r$-decayed knots (for necessary background, see Section \ref{sec:cable}). Denoting the $(p,q)$-cable of the knot $K$ as $C_{p,q}(K)$, the main theorem of this article is:

%we recall the following generalization of surgery on a knot. Consider the $(p,q)$-torus knot $T_{p,q}$. As the closure of a $p$-strand braid, this knot may be naturally viewed in the solid torus by removing a tubular neighbourhood of the braid axis. The complement of $T_{p,q}$ in the solid torus is referred to as a $(p,q)$-cable space. Now given any knot $K$ in $S^3$, the cable knot $C_{p,q}(K)$ is obtained by identifying  the cable space with the boundary of the complement of $K$ along the longitude $\lambda$. We will denote this cable knot by $C$ whenever this simplified notation does not cause confusion.

%We may calculate the knot group $\pi_1(C)$ via the Seifert-Van Kampen Theorem, by viewing the complement $S^3\smallsetminus\nu(C)$ as the identification of the boundaries of $S^3\smallsetminus\nu(K)$ and a solid torus $D^2\times S^1$ along an essential annulus with core curve given by the slope $\mu^q\lambda^p$. If $\pi_1(D^2\times S^1)=\langle t \rangle$ then this gives rise to a natural amalgamated product
%\[\pi_1(C)\cong\pi_1(K)\ast_{\mu^q\lambda^p=t^p}\bZ.\]

%Consulting \cite[Section 3]{CW2010}, the meridian and longitude for $C$ may be calculated as \[\mC= %\mu^u\lambda^vt^{-v} \quad {\rm and} \quad \lC= \mC^{-pq}t^p\] where $u$ and $v$ are positive integers %satisfying $pu-qv=1$ (compare \cite[Proof of Theorem 3.1]{Tsau1988}). 

\begin{theorem}\label{thm:cable}
If K is $r$-decayed then $C_{p,q}(K)$ is $pq$-decayed whenever $\frac{q}{p}>r$. 
\end{theorem}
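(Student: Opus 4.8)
The plan is to exploit the JSJ-type decomposition of the cable complement together with the Seifert structure of the cable space, and then to transport the decay hypothesis on $K$ across the gluing torus. Write $X = S^3\smallsetminus\nu(C_{p,q}(K))$ and decompose it as $X = (S^3\smallsetminus\nu(K))\cup_T C$, where $T=\partial\nu(K)$ and $C$ is the cable space (the complement of the $(p,q)$-curve in a solid torus), so that $\pi_1(C_{p,q}(K)) = \pi_1(K)*_{\pi_1(T)}\pi_1(C)$. The cable space $C$ is Seifert fibered over an annulus with a single exceptional fiber $c$, and its regular fiber $h$ (a nonzero power of $c$) is central in $\pi_1(C)$. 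Two facts about $h$ drive the argument: on the peripheral torus $\partial\nu(C_{p,q}(K))$ the fiber is exactly the cabling slope, $h = \mu_C^{pq}\lambda_C$, so $\{\mu_C,h\}$ is a basis of that peripheral $\bZ\oplus\bZ$; and on $T$ the fiber is the curve of slope $q/p$, that is $h = \mu_K^{q}\lambda_K^{p}$ as an element of $\pi_1(K)$. Since $\frac{q}{p}>r$, this identifies $h$ with a slope of $K$ lying in $\sS_r$.

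First I would reparametrize the target slopes. Writing $\mu_C^{a}\lambda_C^{b}$ for the slope $r'=a/b\ge pq$ (with $b\ge 1$) and substituting $\lambda_C = \mu_C^{-pq}h$ gives $\mu_C^{\,a-pqb}\,h^{\,b}$ with $m:=a-pqb\ge 0$ and $b\ge 1$. Hence the set $\sS_{pq}$ for $C_{p,q}(K)$ is precisely $\{\mu_C^{m}h^{b} : m\ge 0,\ b\ge 1\}$, together with $\mu_C$ itself. Now fix an arbitrary positive cone $\sP'$ on $\pi_1(C_{p,q}(K))$; restricting it to $\pi_1(K)$ gives a positive cone $\sP=\sP'\cap\pi_1(K)$. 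Because $K$ is $r$-decayed and $h=\mu_K^{q}\lambda_K^{p}\in\sS_r$, the element $h$ has a definite sign under $\sP'$, and it suffices to treat the case $h\in\sP'$ (the case $h\in\sP'^{-1}$ being obtained by passing to the reversed order, and yielding $\sS_{pq}\cap\sP'=\emptyset$). Since $h$ is central in $\pi_1(C)$ and $b\ge 1$, to conclude that every $\mu_C^{m}h^{b}$ lies in $\sP'$ it is enough to show $\mu_C\in\sP'$: then $\mu_C^{m}\in\sP'\cup\{1\}$, and $\mu_C^{m}h^{b}$ is a product of elements of $\sP'\cup\{1\}$ with the factor $h^{b}$ in $\sP'$. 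Thus the whole theorem reduces to one sign statement: in any positive cone of $\pi_1(C_{p,q}(K))$, the cable meridian $\mu_C$ and the fiber $h$ have the same sign.

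Establishing this sign-coherence is the crux, and I expect it to be the main obstacle. Within $\pi_1(C)$ alone the generators $\mu_C$ and $c$ are essentially unconstrained (only $h=c^{\pm p}$ is central), so the sign of $\mu_C$ cannot be read off from $h$ abstractly; it must be forced by how $\pi_1(T)=\langle\mu_K,\lambda_K\rangle$ sits inside $\pi_1(C)$ together with the decay of $K$. The key leverage is that $r$-decay does more than fix the sign of $h$: with $h\in\sP'$ it forces \emph{every} slope $\alpha^K_{s}$ with $s\ge r$ into $\sP'$, so the induced linear order on the peripheral lattice $\pi_1(T)$ has its entire sector of slopes $s\in(r,\infty)$ positive. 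The plan is to feed this into the explicit Seifert presentation of $C$ relating $\mu_C$, the section on $T$, and the central fiber $h$, and to show that a linear order on $\langle\mu_C,h\rangle$ in which $\mu_C$ and $h$ had opposite signs is incompatible with $\pi_1(T)$ having all slopes above $r$ positive, the incompatibility arising because the centrality of $h$ couples the two peripheral orders across the Seifert piece. This coupling is exactly the technical content packaged by the reformulation of $r$-decay in Section \ref{sec:reformulation}, and I would invoke that reformulation on the $T$ side, where $q/p>r$ places the fiber slope in the controlled range, to finish. Once $\mu_C\in\sP'$ is known, the reduction above gives $\sS_{pq}\subseteq\sP'$, and since $C_{p,q}(K)$ is nontrivial this is precisely the statement that it is $pq$-decayed.
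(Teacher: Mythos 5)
Your setup is sound and your reduction is valid as far as it goes: with $h=\mu_C^{pq}\lambda_C=\mu^q\lambda^p$, every element of $\sS_{pq}$ has the form $\mu_C^m h^b$ with $m\ge 0$, $b\ge 1$, so if one could show that $h\in\sP'$ forces $\mu_C\in\sP'$, decay of the cable would follow (and the reversed-order trick correctly disposes of the case $h\notin\sP'$). The gap is that this sign-coherence claim --- which you yourself identify as the crux --- is never proved; your third paragraph is a plan, not an argument, and the tool you propose to invoke cannot deliver it. Proposition \ref{prp:equiv} applied ``on the $T$ side'' only reproduces what you already have (namely that $h\in\sP'$ makes every $K$-slope above $r$ positive); it contains no statement coupling the ordering on $\langle\mu_C,\lambda_C\rangle$ to the ordering on $\langle\mu,\lambda\rangle$ across the cable space, and centrality of $h$ in the cable-space group alone does not constrain the sign of $\mu_C$.

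More seriously, the reduction aims at a statement strictly stronger than the theorem, and quite possibly false. Since $\mu_C$ has slope $\infty$, it does not belong to $\sS_{pq}$, so $pq$-decay does not require $\mu_C$ to be positive when $h$ is: an ordering of the peripheral lattice in which positivity is decided first by the exponent of $h$ (lexicographically, with $\mu_C$ negative on the axis) makes all of $\sS_{pq}$ positive while $\mu_C\notin\sP'$, and nothing you say rules out such restrictions. The paper's proof is structured around exactly this possibility: after the easy case $\mu_C\in\sP$, essentially all of its work (the identity $(t^{-v})^p(\mu^u\lambda^v)^p=\mu$, the two excluded options, the unnumbered lemma, and Lemmas \ref{lem:lemma1} and \ref{lem:lemma2}) treats the case $h=t^p\in\sP$ but $\mu_C\notin\sP$, and it concludes decay there \emph{without} ever showing $\mu_C\in\sP$. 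Instead it exploits the flexibility of Proposition \ref{prp:equiv} that your reduction throws away: one is free to use non-integer rational sequences such as $r_i=\frac{pqs+i}{s}$ or $r_i=\frac{i+pq(v+s)}{v+s}$, i.e.\ to verify positivity of $\mu_C^{\,i}h^{s}$ for a single suitably large fixed $s\ge1$, which is a much weaker requirement than positivity of $\mu_C$ itself. In short: what you have reduced the theorem to is not known to be true, is not implied by $r$-decay of $K$ by any argument you give, and is precisely the statement the paper's proof is designed to avoid needing.
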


The proof of Theorem \ref{thm:cable} is contained in Section \ref{sec:cable}. Notice that combining Theorem \ref{thm:decayedknots} and Theorem \ref{thm:cable} provides a rather large class of knots for which sufficiently positive surgery yields a non-left-orderable fundamental group. 

Dehn surgery on cabled knots and non-left-orderability of the resulting fundamental groups may again be viewed in the context of Heegaard Floer homology.  Referring to knots admitting L-space surgeries as L-space knots, Hedden proves: %that given a knot $K$ admitting an L-space surgery, the cable $C_{p,q}(K)$ will also admit L-space surgeries provided $\frac{q}{p}\ge2g(K)-1$ \cite{Hedden2009}. 
\begin{theorem}\cite[Theorem 1.10]{Hedden2009}
If $K$ is an L-space knot  then $C_{p,q}(K)$ is an L-space knot whenever $\frac{q}{p}\ge2g(K)-1$. 
\end{theorem}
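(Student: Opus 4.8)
The plan is to sidestep any direct computation of the knot Floer homology of the cable and instead reduce the assertion to the corresponding statement for the companion $K$, using a reducible Dehn surgery. The key classical input is a theorem of Gordon on surgery along satellite knots: for the $(p,q)$-cable with $p\ge 2$, the \emph{cabling slope} $pq$ produces a reducible manifold, and in fact there is a homeomorphism
\[
S^3_{pq}(C_{p,q}(K)) \cong L(p,q)\ \#\ S^3_{q/p}(K),
\]
where $L(p,q)$ is a lens space. A quick homological check supports the bookkeeping: an integer $pq$-surgery gives $|H_1|=pq$, while the right-hand side has first homology of order $p\cdot q$. Thus a single integer surgery on the cable, at slope $pq$, is realized as a connected sum of a lens space with a rational surgery on the companion.

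I would then transfer the problem to Heegaard Floer homology. Since $\HFhat$ of a connected sum of rational homology spheres is the tensor product of the two factors, both $\rk\HFhat$ and the order of $H_1(\,\cdot\,;\bZ)$ are multiplicative under connected sum; as the Floer rank always dominates $|H_1|$, a connected sum of rational homology spheres is an L-space if and only if each summand is. The lens space $L(p,q)$ is an L-space, so $S^3_{pq}(C_{p,q}(K))$ is an L-space exactly when $S^3_{q/p}(K)$ is.

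It remains to see that $S^3_{q/p}(K)$ is an L-space when $q/p\ge 2g(K)-1$, and this is where the hypothesis that $K$ is an L-space knot is used. For an L-space knot the set of L-space surgery slopes is precisely $\{\,r : r\ge 2g(K)-1\,\}$, by the genus bound on L-space surgeries together with the monotonicity recalled in the text (that $S^3_n(K)$ being an L-space forces $S^3_r(K)$ to be an L-space for every $r\ge n$). Since $q/p\ge 2g(K)-1$ by hypothesis, $S^3_{q/p}(K)$ is an L-space; hence $S^3_{pq}(C_{p,q}(K))$ is an L-space and $C_{p,q}(K)$ is an L-space knot.

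The main obstacle is the $3$-manifold topology rather than the Floer-homological bookkeeping: one must correctly identify the reducible slope as $pq$ and the induced companion coefficient as exactly $q/p$, so that the cabling hypothesis $q/p\ge 2g(K)-1$ lines up with the threshold $2g(K)-1$ beyond which L-space surgeries on $K$ persist. A purely Floer-theoretic alternative, closer in spirit to the cabling paper itself, would instead derive the filtered chain homotopy type of $CFK^\infty(C_{p,q}(K))$ from that of $K$ and verify, via the large-surgery formula, that every subquotient $\hat{A}_s$ has homology of rank one. That route avoids Gordon's theorem but demands the full cabling formula for knot Floer homology and a direct check that the staircase complex of an L-space knot again cables to a staircase once $q/p$ is large; this is substantially more involved than the surgery argument, which is why I would adopt the latter.
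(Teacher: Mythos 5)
Your proposal is correct, but there is nothing in the paper to compare it against: this statement is Hedden's theorem, quoted from \cite{Hedden2009} purely to exhibit the parallel between Theorem \ref{thm:cable} and the L-space picture, and the paper gives no proof of it. Judged on its own merits, your argument is complete and sound: Gordon's theorem \cite{Gordon83} does give $S^3_{pq}(C_{p,q}(K)) \cong L(p,q) \, \# \, S^3_{q/p}(K)$ in this paper's conventions (the cabling annulus meets the companion torus in the slope $\mu^q\lambda^p$, so the induced coefficient on $K$ is indeed $q/p$, and the homology orders $p$ and $q$ multiply to $pq$); a connected sum of rational homology spheres is an L-space if and only if each summand is, by multiplicativity of $\rk\HFhat$ and of $|H_1|$ together with the bound $\rk\HFhat(Y)\ge|H_1(Y;\bZ)|$; and the Ozsv\'ath--Szab\'o threshold result (\cite[Proposition 9.5]{OSz-rational}, restated as \cite[Lemma 2.13]{Hedden2009}) gives that $S^3_{q/p}(K)$ is an L-space precisely because $q/p\ge 2g(K)-1$ and $K$ is an L-space knot. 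Since $pq>0$, the cable then has a positive L-space surgery and is an L-space knot. What you have reproduced is essentially the argument of Hom \cite{Hom2011}, which the paper cites for the converse; Hom's observation is exactly that the Gordon decomposition upgrades Hedden's implication to an if-and-only-if. Hedden's original proof, by contrast, is the Floer-theoretic computation you sketch as the alternative route, deriving the knot Floer homology of the cable and checking the L-space condition via the large surgery formula -- substantially longer, though it yields finer information about $\HFhat$ of the cable. It is also worth noting that your two main ingredients, Gordon's reducibility of the slope $pq$ and the $2g-1$ threshold for torus knots, are precisely the tools this paper assembles in its proof of Theorem \ref{thm:locable}, just aimed at a different conclusion. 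The only degenerate case to flag is $p=1$, where the cable is $K$ itself and the statement is vacuous, as you implicitly acknowledge by restricting to $p\ge 2$.
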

Here, the quantity $g(K)$ is the Seifert genus of $K$. Note that the converse of this statement has been recently established by Hom \cite{Hom2011}. 

In order to assess the strength of Theorem \ref{thm:cable}, it is natural to ask when Dehn surgery on a cable knot yields a manifold that has left-orderable fundamental group. It turns out that, in the case that $K$ is $r$-decayed, Theorem \ref{thm:cable} is close to describing all possible non-left-orderable surgeries on a cable knot $C_{p,q}(K)$, in the following sense:  
\begin{theorem}
\label{thm:locable}
Suppose that $C$ is the $(p,q)$-cable of some knot. If $r \in \bQ$ satisfies $r < pq -p -q$, then $\pi_1(S^3_{r} (C))$ is left-orderable.
\end{theorem}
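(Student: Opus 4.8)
The plan is to exploit the satellite structure of a cable and to glue left-orders across the essential cabling torus. \textit{Step 1 (decomposition).} Write $C=C_{p,q}(K)$ with $p\ge 2$, let $E=S^3\smallsetminus\nu(K)$ be the companion exterior and $T=\partial E$ the cabling torus, so that the exterior of $C$ is $E\cup_T W$ with $W$ the cable space, a Seifert fibred space over the annulus with one exceptional fibre of order $p$. Dehn filling $C$ along $r$ amounts to filling the free boundary of $W$; since the only reducible cable slope is $pq$ (Gordon) and $r<pq-p-q<pq$, the resulting piece $N_r$ is irreducible. As $r\ne pq$ the fibration extends over the filling, so $N_r$ is Seifert fibred over the disc with two exceptional fibres, the new one of order $bpq-a$, where $r=a/b$ in lowest terms. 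The inequality $r<pq-p-q$ is precisely $bpq-a>b(p+q)$, so this fibre is genuine and $T$ remains incompressible. Van Kampen then gives
\[\pi_1\bigl(S^3_r(C)\bigr)\cong \pi_1(E)\ast_{\bZ^2}\pi_1(N_r),\qquad \bZ^2\cong\pi_1(T)\hookrightarrow\text{both factors}.\]

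\textit{Step 2 (reduce to matching peripheral orders).} Both factors are left-orderable: $\pi_1(E)$ by \cite{BRW2005}, and $\pi_1(N_r)$ because $N_r$ is irreducible with nonempty boundary, whence $b_1\ge 1$ and \cite[Theorem 1.1]{BRW2005} applies. To left-order the amalgam it suffices to exhibit left-orders on the two factors that restrict to a common left-order on $\pi_1(T)$ (equivalently, faithful actions on the line agreeing on $\pi_1(T)$) and then to glue along $T$; this is the torus-gluing mechanism for left-orderability employed in \cite{BRW2005,CW2010}. Since a left-order on $\bZ^2$ with nontrivial convex subgroup is pinned down by the slope of that subgroup together with the signs of the two generators, the problem reduces to comparing which peripheral slopes on $T$ are carried by left-orders extending over $E$ and over $N_r$.

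\textit{Step 3 (the threshold).} The companion side is uniformly flexible: for \emph{any} knot the abelianisation $\pi_1(E)\to\bZ$ produces a left-order whose convex subgroup on $T$ is the longitude $\langle\lambda\rangle$, so the longitude slope is always realizable from $E$ (with either orientation after a sign reversal). On the Seifert side the regular fibre of $W$ meets $T$ in slope $q/p$ relative to the $(\mu,\lambda)$-framing of $K$, and the central fibre $h$ of $N_r$ cannot be convex, as $\pi_1(N_r)/\langle h\rangle\cong\bZ/p\ast\bZ/(bpq-a)$ has torsion; hence the peripheral slopes extending over $N_r$ sweep out an interval, depending only on $p,q,r$, that is governed by the realizability inequality for Seifert fibred spaces (Eisenbud--Hirsch--Neumann, Jankins--Neumann, in the form used in \cite{BRW2005}). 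The crux is to compute the Seifert invariants of $N_r$ explicitly and to show that this interval contains the longitude slope of $K$ exactly when $bpq-a>b(p+q)$, i.e. exactly when $r<pq-p-q$. Matching the longitude slope from the two sides and gluing the corresponding orders then yields the desired left-order on $\pi_1(S^3_r(C))$.

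The main obstacle is Step 3: turning the Seifert data of the filled cable space into the explicit interval of extendable boundary slopes and pinning its overlap with the companion's longitude order at the stated bound. A helpful calibration is $K=$ unknot, where $E$ is a solid torus, $S^3_r(C)=S^3_r(T_{p,q})$ is Seifert fibred over $S^2$ with three exceptional fibres, and left-orderability of the fundamental group is known to hold precisely for $r<pq-p-q=2g(T_{p,q})-1$; since the realizable interval for $N_r$ depends only on $p,q,r$, and passing from a solid torus to a genuine knot exterior can only enlarge the set of matchable boundary slopes (a Dehn filling kills a peripheral class, a knot exterior none), the same bound persists for every companion $K$.
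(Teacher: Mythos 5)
Your Step 1 decomposition is fine, but the argument has two genuine gaps. The fatal one is Step 2: it is simply not a theorem that an amalgam $A\ast_{\bZ^2}B$ of left-orderable groups is left-orderable as soon as one exhibits left-orders on the two factors restricting to a common order on the amalgamating subgroup. Neither \cite{BRW2005} nor \cite{CW2010} contains such a gluing mechanism; the known criteria for left-orderability of amalgamated products (Bludov--Glass) require substantially more than agreement on the amalgamating subgroup (e.g.\ relative convexity of the amalgamating subgroup in both factors, or compatibility of whole families of orders), and the torus-gluing technology that does exist in the $3$-manifold literature works with detected slopes and actions on $\bR$ precisely because bare matching of restriction orders is insufficient. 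So even a complete Step 3 would not close the proof. The second gap is Step 3 itself, which you concede is not carried out: you never compute the interval of boundary slopes of $N_r$ realizable by left-orders, and the closing ``calibration'' claim --- that replacing a solid torus companion by an arbitrary knot exterior ``can only enlarge the set of matchable boundary slopes'' --- is a heuristic with no argument behind it. Determining which peripheral orders extend over a general knot exterior is a hard problem, and nothing you have written guarantees the longitude matching you need.

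The paper's proof sidesteps both issues by collapsing the companion rather than keeping it as a factor of an amalgam. Killing the normal closure of the commutator subgroup of the companion group inside $\pi_1(C)$ yields a homomorphism $\phi\co\pi_1(C)\to\pi_1(T_{p,q})$ that preserves peripheral structure (Lemma \ref{lem:homo}), hence an induced homomorphism $\pi_1(S^3_r(C))\to\pi_1(S^3_r(T_{p,q}))$ with nontrivial image. Since $r<pq-p-q$ forces $S^3_r(C)$ to be irreducible (Gordon \cite{Gordon83} and Gordon--Luecke \cite{GL96}, as you also note), \cite[Theorem 1.1]{BRW2005} reduces the whole problem to left-orderability of $\pi_1(S^3_r(T_{p,q}))$, which holds for $r<2g(T_{p,q})-1=pq-p-q$ by combining Moser's classification of torus knot surgeries \cite{Moser1971} with the equivalence between L-spaces and non-left-orderability for such manifolds \cite{BGW2010}. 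Note that this torus knot input is exactly your unknot ``calibration''; what your outline is missing is the homomorphism of Lemma \ref{lem:homo} together with the criterion of \cite{BRW2005}, which together make any gluing of orders along the cabling torus unnecessary.
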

This result is a special case of a more general observation pertaining to satellite knots that is discussed in Section \ref{sec:satellite}.  Notice that Theorem 6 makes no reference to the original knot being $r$-decayed. However, restricted to $r$-decayed knots, Theorem \ref{thm:cable} and Theorem \ref{thm:locable} combine to produce an interval of surgery coefficients for which the left-orderability of the associated quotient is not determined. More precisely:
\begin{question} If $K$ is $r$-decayed and $C$ is a $(p,q)$-cable of $K$ with $\frac{q}{p}>r$, can Theorem  \ref{thm:cable} and Theorem \ref{thm:locable} be sharpened to determine when $\pi_1(S^3_{r'} (C))$ is left-orderable for $r'$ satisfying $pq -p -q < r'  \leq  pq$?\end{question}  

\subsection*{Acknowledgments} We thank Josh Greene, Tye Lidman and Dale Rolfsen for helpful comments on an earlier draft  of this paper.

\section{A practical reformulation of Theorem \ref{thm:decay}}\label{sec:reformulation}

%reformulation.tex

%Old version without positive cones.

%\begin{lemma}  
%\label{lem:iff}
%Let $G$ be a left-orderable group, and assume that 
%\[ g>1 \mbox{ implies } h>1\]
%holds for every left-ordering $>$ of $G$. Then $g>1$ if and only if $h>1$.
%\end{lemma}
%\begin{proof}
%We need only show the converse, namely $h>1$ implies $g>1$ for every left-ordering of $G$.  For contradiction, suppose this is not the case, so there exists a left-ordering such that $h>1$ and $1>g$.  Let $>'$ denote the reverse ordering of $G$.  Then with respect to the left-ordering $>'$, we have $g>'1$ and $1 >'h$, contradicting our assumption.
%\end{proof}

We begin with a reformulation of $r$-decay that will be essential in connecting this work with the results of \cite{CW2010}. This will require the following lemma:

\begin{lemma}  
\label{lem:iff}
Let $G$ be a left-orderable group containing elements $g,h$.  If $g\in\sP$ implies $h\in\sP$ for every positive cone $\sP$, then $g\in\sP$ if and only if $h\in\sP$.
\end{lemma}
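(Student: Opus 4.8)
The plan is to reduce the claim to its nontrivial direction and then exploit the symmetry $\sP\leftrightarrow\sP^{-1}$ of positive cones. Since the hypothesis already supplies the implication $g\in\sP\Rightarrow h\in\sP$ for \emph{every} cone $\sP$, the only thing left to establish is the reverse implication $h\in\sP\Rightarrow g\in\sP$; conjoining the two yields the desired biconditional.

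The key preliminary observation I would record is that whenever $\sP$ is a positive cone, so is $\sP^{-1}$. Indeed, inverting the partition $G=\sP\sqcup\{1\}\sqcup\sP^{-1}$ gives $G=\sP^{-1}\sqcup\{1\}\sqcup\sP$, so $\sP^{-1}$ is nonempty and its complement is again a disjoint union of the required form; the submonoid condition survives inversion because $a^{-1}b^{-1}=(ba)^{-1}\in\sP^{-1}$ whenever $a,b\in\sP$. Thus $\sP^{-1}$ is a positive cone, and tautologically an element lies in $\sP$ exactly when its inverse lies in $\sP^{-1}$, with $\sP\cap\sP^{-1}=\emptyset$.

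With this in hand, fix a positive cone $\sP$ containing $h$ and suppose toward a contradiction that $g\notin\sP$. Since $g\neq 1$, the trichotomy forces $g^{-1}\in\sP$, equivalently $g\in\sP^{-1}$. Now apply the hypothesis to the positive cone $\sP^{-1}$: from $g\in\sP^{-1}$ we deduce $h\in\sP^{-1}$, i.e. $h^{-1}\in\sP$. But then both $h$ and $h^{-1}$ lie in $\sP$, contradicting $\sP\cap\sP^{-1}=\emptyset$. Hence $g\in\sP$, which completes the reverse implication.

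I do not anticipate any serious obstacle: the entire content is the remark that the hypothesis is quantified over all cones and that the family of cones is closed under inversion, which lets me transport the given one-way implication from $\sP$ to $\sP^{-1}$. The single point requiring minor care is the degenerate case $g=1$, where the hypothesis is vacuous and the biconditional can fail; I would either restrict attention to $g\neq 1$ (as holds in every intended application, where $g,h$ are nontrivial slopes) or explicitly flag that the statement is understood for nontrivial $g$.
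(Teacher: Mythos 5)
Your proof is correct and follows essentially the same route as the paper's: reduce to the converse implication and apply the hypothesis to the reversed cone $\sP^{-1}$ to force a contradiction. Your explicit check that $\sP^{-1}$ is again a positive cone, and your flagging of the degenerate case $g=1$ (which the paper's proof also tacitly assumes away, since it asserts $g\notin\sP$ gives $g\in\sP^{-1}$), are minor refinements of the same argument rather than a different approach.
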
 
\begin{proof}
We need only show the converse, namely $h\in\sP$ implies $g\in\sP$ for every positive cone $\sP \subset G$.  For a contradiction, suppose this is not the case, so there exists a positive cone such that $h\in\sP$ and $g\notin\sP$.  Consider the positive cone $\sQ=\sP^{-1}$, defining the reverse ordering of $G$.  This gives $g\in\sQ$ and $h\notin\sQ$, contradicting our assumption.
\end{proof}

\begin{proposition}
\label{prp:equiv}
 A knot $K$ is $r$-decayed if and only if for every positive cone $\sP \subset \pi_1(K)$ there exists a strictly increasing sequence of positive rational numbers $\{r_i\}$ with $r_i\to\infty$ satisfying \begin{itemize}
\item[(1)] $r = r_0$, and 
\item[(2)] $\alpha_r\in\sP$ implies $\alpha_{r_i}\in\sP$ for all $i$.
\end{itemize}
\end{proposition}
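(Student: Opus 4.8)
The plan is to prove the two implications separately; the forward direction is essentially formal, while the reverse direction carries all of the content. The tools I will use are elementary features of the peripheral subgroup $\langle\mu,\lambda\rangle\cong\bZ\oplus\bZ$ and of a positive cone $\sP$: commutativity of $\mu$ and $\lambda$, so that $\alpha_a^m\alpha_b^n=\mu^{mp+np'}\lambda^{mq+nq'}$ for slopes $a=\tfrac{p}{q}$ and $b=\tfrac{p'}{q'}$; closure of $\sP$ under multiplication; and the \emph{power trick} $g\in\sP\iff g^k\in\sP$, valid for every $k\ge1$ in any left-orderable group (if $g\in\sP$ then $g^k\in\sP$, and conversely $g\notin\sP$ forces $g=1$ or $g^{-1}\in\sP$, hence $g^k\notin\sP$).

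For the forward implication, assume $K$ is $r$-decayed and fix a positive cone $\sP$. I would take the explicit sequence $r_i=r+i$, which is strictly increasing, tends to infinity, has $r_0=r$, and consists of positive rationals since $r>0$. By $r$-decay, either $\sP\cap\sS_r=\sS_r$ or $\sP\cap\sS_r=\emptyset$. In the former case every $\alpha_{r_i}$ lies in $\sP$, so (2) holds unconditionally; in the latter case $\alpha_r\notin\sP$, so the hypothesis of (2) is never met and (2) holds vacuously. Thus (1) and (2) are satisfied.

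The reverse implication rests on a \emph{betweenness} property for slopes: if $\alpha_r,\alpha_s\in\sP$ with $r\le s$, then $\alpha_{r'}\in\sP$ for every rational $r'$ with $r\le r'\le s$. For $r<r'<s$, the primitive vector representing $\alpha_{r'}$ lies strictly inside the convex cone spanned in $\bR^2$ by the primitive vectors of $\alpha_r$ and $\alpha_s$; solving the corresponding $2\times2$ system over $\bQ$ and clearing denominators produces positive integers $m,n,k$ with $\alpha_r^m\alpha_s^n=\alpha_{r'}^k$. Closure of $\sP$ under multiplication places the left-hand side in $\sP$, and the power trick then gives $\alpha_{r'}\in\sP$; the endpoints are immediate. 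I expect this cone-and-denominator step to be the main obstacle, chiefly in verifying positivity (which comes from $r'$ lying strictly between $r$ and $s$) and integrality of the coefficients.

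Granting betweenness, fix a cone $\sP$ and establish the dichotomy. If $\alpha_r\in\sP$, the sequence provided by the hypothesis gives $\alpha_{r_i}\in\sP$ for all $i$; for any $r'\ge r$ I choose $r_i>r'$ (possible since $r_i\to\infty$) and apply betweenness on $[r,r_i]$ to conclude $\alpha_{r'}\in\sP$, so $\sP\cap\sS_r=\sS_r$. If instead $\alpha_r\notin\sP$, then $\alpha_r^{-1}\in\sP$ (as $\alpha_r\ne1$), so $\alpha_r$ lies in the reverse cone $\sQ=\sP^{-1}$; applying the hypothesis and the case just treated to $\sQ$ yields $\sS_r\subseteq\sP^{-1}$, i.e.\ $\alpha_{r'}\notin\sP$ for all $r'\ge r$, so $\sP\cap\sS_r=\emptyset$. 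This passage to the reverse order is precisely the mechanism behind Lemma~\ref{lem:iff}, and it is what lets the one-directional condition (2) govern both halves of the dichotomy.
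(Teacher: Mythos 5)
Your proof is correct and follows essentially the same route as the paper: your ``betweenness'' lemma is exactly the paper's Cramer's rule computation producing positive integers $a,b,c$ with $\alpha_{r_i}^a\alpha_{r_{i+1}}^b=\alpha_{r'}^c$, and your passage to the reverse cone $\sQ=\sP^{-1}$ is precisely the paper's appeal to Lemma~\ref{lem:iff}. The only cosmetic differences are that you sandwich $r'$ in $[r,r_i]$ rather than between consecutive terms $r_i<r'<r_{i+1}$, and that you inline the proof of Lemma~\ref{lem:iff} instead of citing it.
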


\begin{proof}
Suppose that $K$ is $r$-decayed , and let $\sP$ be any positive cone.  Choose a strictly increasing sequence of rational numbers $\{ r_i \}$ with $r_0 = r$ and $r_i\to\infty$.  Whenever $\alpha_{r}=\alpha_{r_0}  \in \sP$ we have $\sS_r \cap \sP \neq \emptyset$, so that $\sS_r \cap \sP = \sS_r$ since $K$ is $r$-decayed.  It follows that $\alpha_{r_i}\in \sS_r \subset \sP$ for all $i$.

To prove the converse, let $\sP$ be a positive cone for $\pi_1(K)$. Fix a strictly increasing sequence $\{r_i\}$ of rational numbers limiting to infinity and satisfying (1) and (2). Suppose that $\alpha_r \in \sP$, then by assumption $\alpha_{r_i}\in\sP$ for all $i>0$.

Now suppose that $\mu^m \lambda^n$ is an element of $\sS_r $.  Choose $r_{i}, r_{i+1}$ with corresponding reduced forms $\frac{p_i}{q_i},  \frac{p_{i+1}}{q_{i+1}}$ such that $r_i < \frac{m}{n} < r_{i+1}$.  By solving
\begin{align*}
q_i a  +   q_{i+1} b & =  cn \\ 
p_i a  +   p_{i+1} b & = cm
\end{align*}
%$$\begin{array}{ccccc} q_i a & +  & q_{i+1} b & = & cn \\ p_i a & +  & p_{i+1} b & = & cm  \end{array}$$
 we can find positive integers $a, b$ and $c$ such that 
$(\mu^{p_i} \lambda^{q_i})^a (\mu^{p_{i+1}} \lambda^{q_{i+1}})^b = (\mu^m \lambda^n)^c$.
Explicitly, Cramer's rule gives
\[
a=\left|\begin{matrix} n & q_{i+1} \\ m & p_{i+1}
\end{matrix}\right|, \quad
 b =  \left| \begin{matrix} q_i & n \\ p_i & m \end{matrix} \right|,  \quad
c = \left|\begin{matrix} q_i & q_{i+1} \\ p_i & p_{i+1} \end{matrix} \right|;
\]
%\[ a = \det \left( \begin{array}{cc}
%n & q_{i+1} \\
%m & p_{i+1} \end{array} \right), 
 %b = \det \left( \begin{array}{cc}
%q_i & n \\
%p_i & m \end{array} \right),  
%c = \det \left( \begin{array}{cc}
%q_i & q_{i+1} \\
%p_i & p_{i+1} \end{array} \right);
%\]
note that all these quantities are positive because of our restriction  $r_i < \frac{m}{n} < r_{i+1}$  (compare \cite[Lemma 17]{CW2010}) .
This shows that $\mu^m \lambda^n$ is positive, since its $c$-th power is expressed as a product of positive elements.  Hence $\sS_r \cap \sP = \sS_r$.
%On the other hand, the case $\alpha_{r_i} \notin P$ for all $i$ is handled by an identical argument.  With $\sP^{-1}$ in place of $\sP$, we conclude that $\sS_r \subset \sP^{-1}$, and hence $\sS_r \cap \sP = \emptyset$.

This establishes the implication $\alpha_{r} \in \sP \Rightarrow \sS_r \subset \sP$ for every positive cone $\sP$.  By Lemma \ref{lem:iff}, this is equivalent to $\sS_r \cap \sP = \sS_r$ or $\sS_r \cap \sP = \emptyset$ for every positive cone $\sP$, so that $K$ is $r$-decayed.
\end{proof} 

\begin{remark}In practice, it is often more natural to establish $\alpha_r\in\sP$ implies $\alpha_{r_i}^{w_i}\in\sP$ for all $i$, where $w_i\in\bN$ (see in particular  the proofs of Lemma \ref{lem:lemma1} and Lemma \ref{lem:lemma2}).   This situation arises when one constructs (for a given positive cone $\sP$) a sequence of \textit{unreduced} rationals $\{ r_i \} = \{ \frac{p_i}{q_i} \}$ for which $\gcd( p_i, q_i ) = w_i \geq 1$, and $\mu^{p_0} \lambda^{q_0} \in \sP$ implies $\mu^{p_i} \lambda^{q_i} \in \sP$ for all $i$.
%Precisely, a given peripheral element $\mu^{p'}\lambda^{q'}$ produces a power $\alpha_{r}^{w}=  (\mu^p\lambda^q)^w$ of a slope $\alpha_r$ in general, where $w=(p',q')$. 
 Notice that the implication $\alpha_r\in\sP$ implies $\alpha_{r_i}^{w_i}\in\sP$ still allows us to apply Proposition \ref{prp:equiv}, since $\alpha_{r_i}^{w_i}\in\sP$ if and only if $\alpha_{r_i}\in\sP$ (this simple observation holds in any left-orderable group). Ultimately, this results in more flexibility in selecting the sequence $\{r_i\}$.\end{remark}

The equivalence established in Proposition \ref{prp:equiv} shows that all examples considered in \cite{CW2010} are $r$-decayed for certain $r$, as \cite[Corollary 11]{CW2010} is a special case of Proposition \ref{prp:equiv}.  

\begin{theorem}\cite[Theorem 24, Theorem 28 and Theorem 30]{CW2010}
\label{thm:decayedknots}
\begin{enumerate}
\item The $(p,q)$-torus knot is $(pq-1)$-decayed for all positive, relatively prime pairs of integers $p,q$.
\item The $(-2, 3, q)$-pretzel knot is $(10+q)$-decayed for all odd $q \geq 5$.
\item The $(3,q)$-torus knot with one positive full twist added along two strands is $(3q+2)$-decayed, for all positive $q$ congruent to $2$ modulo $3$.
\end{enumerate}
\end{theorem}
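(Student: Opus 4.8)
The plan is to derive all three statements from the corresponding results of \cite{CW2010} by reinterpreting their conclusions through Proposition \ref{prp:equiv}. As already noted, \cite[Corollary 11]{CW2010} is precisely the special case of Proposition \ref{prp:equiv} in which the forcing sequence $\{r_i\}$ is exhibited explicitly. Consequently, what Theorems 24, 28 and 30 of \cite{CW2010} actually produce---for each positive cone $\sP \subset \pi_1(K)$, a strictly increasing sequence of slopes tending to infinity that is forced to lie in $\sP$ as soon as one designated initial slope does---is exactly the hypothesis of Proposition \ref{prp:equiv}. The bulk of the argument is therefore bookkeeping: in each family one identifies the initial slope $r_0$ of the sequence supplied by \cite{CW2010} and checks that it equals the asserted decay value.

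First I would treat the torus knot $T_{p,q}$ using the presentation $\pi_1 = \langle a, b \mid a^p = b^q\rangle$, in which $z = a^p = b^q$ is central and represents the Seifert fibre slope $pq$ on the boundary. Given a positive cone $\sP$ with $\alpha_{pq-1} \in \sP$, one uses centrality of $z$ together with the closure property $\sP \cdot \sP \subseteq \sP$ to propagate positivity along a sequence $r_i \to \infty$ beginning at $r_0 = pq-1$. Here the Remark following Proposition \ref{prp:equiv} is convenient: since $\alpha_{r_i}^{w_i} \in \sP$ if and only if $\alpha_{r_i} \in \sP$, one may work with unreduced fractions and powers of slopes at each stage, which removes the need to reduce at every step and makes the forcing transparent.

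The $(-2,3,q)$-pretzel and twisted torus families are handled in the same spirit, but rely on the explicit group presentations and peripheral expressions recorded in \cite{CW2010} rather than on a single central element. For these one must track the defining relations carefully to pass positivity from each slope to the next, and the initial values emerge from the computation as $10+q$ and $3q+2$ respectively. I expect the central element to make the torus-knot case essentially immediate, so the main obstacle is precisely this slope-to-slope forcing for the pretzel and twisted torus knots: establishing the algebraic identities in the knot group that witness $\alpha_{r_i} \in \sP \Rightarrow \alpha_{r_{i+1}} \in \sP$. Since that forcing is exactly the content of the cited theorems, the remaining task is only to confirm that their output assembles into a sequence of the form demanded by Proposition \ref{prp:equiv} and to read off the correct $r_0$ in each case.
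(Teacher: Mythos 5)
Your proposal is correct and follows essentially the same route as the paper: both treat the cited results of \cite{CW2010} as supplying, for each positive cone $\sP$, the forcing implication $\alpha_{r}\in\sP \Rightarrow \alpha_{r_i}\in\sP$ along a sequence tending to infinity, and then invoke Proposition \ref{prp:equiv} (together with its accompanying Remark on unreduced slopes) to conclude $r$-decay with the stated initial values $pq-1$, $10+q$, and $3q+2$. The paper simply writes this out for the pretzel family with the single integer sequence $r_i = r+i$ and declares the other cases similar, so your additional sketch of the torus-knot forcing via the central element is extra detail rather than a divergence in method.
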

\begin{proof}
We consider the case of $K_q$, the $(-2, 3, q)$-pretzel knot with $q \geq 5$ odd, the other cases are similar.   Set $r = 10+q$, and $r_i = r+i$.  It is shown in \cite{CW2010} that for every positive cone $\sP$ in $\pi_1(K_q)$, the implication $\alpha_{r} \in  \sP \Rightarrow \alpha_{r_i} \in \sP$ holds for all $i \geq 0$.  This means that for every left-ordering of $\pi_1(K_q)$,  the integer sequence $\{r_i \}$ satisfies the properties required by Proposition \ref{prp:equiv}, and we conclude that $K_q$ is $r$-decayed. 
\end{proof}

Note that the above proof illustrates some particularly special behaviour, as the rational sequences $\{r_i \}$ required by Proposition \ref{prp:equiv} (which {\em a priori} may be different for each left-ordering) are replaced by a single integer sequence sufficient for every left-ordering. Thus, Proposition \ref{prp:equiv} provides a more workable method (than used previously) for checking when a knot has surgeries that yield a non-left-orderable fundamental group.  Combined with the material established in \cite[Section 2]{CW2010}, we provide a short proof of Theorem \ref{thm:decay}.
\begin{proof}[Proof of Theorem \ref{thm:decay}]
For contradiction, assume that $\pi_1(S^3_{r'}(K))$ is left-orderable for some $r' \geq r$, and consider the short exact sequence
\[
1 \rightarrow \langle \langle \alpha_{r'} \rangle\rangle \stackrel{i}{\rightarrow} \pi_1(K) \stackrel{f}{\rightarrow} \pi_1(S^3_{r'}(K)) \rightarrow 1,
\]
as defined in the introduction.   Let $\mu, \lambda \in \pi_1(K)$ denote the meridian and longitude.
Since $\pi_1(S^3_{r'}(K))$ is left-orderable, $\langle \langle \alpha_{r'} \rangle \rangle \cap \langle \mu, \lambda \rangle = \langle \alpha_{r'} \rangle $ (see proof of \cite[Proposition 20]{CW2010}).  In particular, if we fix an arbitrary rational number $s_0 > r'$, then $ f(\alpha_{s_0} ) \neq 1$.  Thus, we may choose a positive cone $\sQ$ in $\pi_1(S^3_{r'}(K))$ that contains $f(\alpha_{s_0})$.  Next, choose a positive cone $\sQ' \subset \langle \langle \alpha_{r'} \rangle \rangle$ not containing $\alpha_{r'}$, and define a positive cone $\sP \subset \pi_1(K)$ by
\[ \sP = i(\sQ') \sqcup f^{-1}(\sQ).
\]
Note that $\alpha_{r'} \notin \sP$, and $ \alpha_{s_0} \in \sP$.  

This is a standard construction for creating a left-ordering of a group using a short exact sequence, here the result is a left-ordering of $\pi_1(K)$ with positive cone $\sP$, relative to which the subgroup $\langle \langle \alpha_{r'} \rangle\rangle$ is convex.   Because $\langle \langle \alpha_{r'} \rangle\rangle$ is convex, the intersection  $\langle \langle \alpha_{r'} \rangle \rangle \cap \langle \mu, \lambda \rangle = \langle \alpha_{r'} \rangle $ is convex in the restriction ordering of $\langle \mu, \lambda \rangle$.  Therefore, \cite[Proposition 18]{CW2010} shows that all slopes $\alpha_{s}$ with $s >r'$ must have the same sign.  In particular, since $\alpha_{s_0}$ is positive it follows that all slopes $\alpha_{s}$ with $s >r'$ are positive, so that
\[ Q \cap \sS_r = \{ \alpha_{s} | s > r' \}.
\]
Therefore, $K$ is not $r$-decayed.
\end{proof}

We remark that there is a more geometric argument establishing Theorem \ref{thm:decay}, that relies upon an understanding of the topology of the space of left-orderings of $\mathbb{Z} \oplus \mathbb{Z}$ (see \cite[Section 3]{Sikora2004} and \cite[Chapter 6]{Clay2010}).  Roughly, every left-ordering of the knot group $\pi_1(K)$ restricts to a left-ordering of the peripheral subgroup that defines a line in $\bZ \oplus \bZ$, with all positive elements of $\bZ \oplus \bZ$ on one side of the line, and all the negative elements on the other side.  As a result, given two rationals $r_1 < r_2$ corresponding to slopes $\alpha_{r_1}$ and  $\alpha_{r_2}$ that have the same sign in every left-ordering,  no left-ordering can restrict to an ordering of the peripheral subgroup with corresponding slope $s$ between $r_1$ and $r_2$.  The proof of Theorem \ref{thm:decay} then follows from checking that whenever $\pi_1(S^3_{r'}(K))$ is left-orderable, we can define a left-ordering of $\pi_1(K)$ that restricts to yield a line of slope $r'$ in the peripheral subgroup (compare \cite[Proof of Theorem 9]{CW2010}).

%Note that the existence of a positive cone $\sP$ for a group $G$ gives rise to a left-invariant, strict, total order on the elements of $G$ by declaring $g>h$ if and only if $h^{-1}g\in\sP$ for any $g,h\in G$. This equivalent formulation of a left-order on $G$ is often more useful in practice. In particular, we have the following refinement of \cite[Corollary 11]{CW2010}:

%We recall that this Theorem follows immediately from \cite[Theorem 9]{CW2010}. Further, in combination with Proposition \ref{prp:equiv} we obtain a proof of Theorem \ref{thm:decay}. We note however that in practice the application of Theorem \ref{thm:application} presents a considerably more tractable problem, and it is this form that we will apply to obtain a proof of Theorem \ref{thm:cable}. Before turning our attention to this proof, we conclude the present discussion with an important remark.

%Notice that the results on which the present discussion rely (namely, \cite[Corollary 10 and Corollary 11]{CW2010}) produce open intervals of rationals for which the corresponding slopes give rise to surgeries with non-left-orderable fundamental group. However, the reader will notice that our conclusions here (namely, Theorem \ref{thm:decay} and Theorem \ref{thm:application} include the slope $\alpha_r=\alpha_{r_0}$ corresponding to a left endpoint. That we may conclude this stems from the following observation: 

\section{The proof of Theorem \ref{thm:cable}}\label{sec:cable}

%proof-option.tex

We recall the construction of a cabled knot in order to fix notation. Consider the $(p,q)$-torus knot $T_{p,q}$, where $p, q >0$ are relatively prime. As the closure of a $p$-strand braid, this knot may be naturally viewed in a solid torus $T$ by removing a tubular neighbourhood of the braid axis. The complement of $T_{p,q}$ in $T$ is referred to as a $(p,q)$-cable space. Now given any knot $K$ in $S^3$, the cable knot $C_{p,q}(K)$ is obtained by identifying  the boundary of $T$ with the boundary of $S^3\smallsetminus\nu(K)$, identifying the longitude of $T$ with the longitude $\lambda$ of $K$. We will denote this cable knot by $C$ whenever this simplified notation does not cause confusion.

The knot group $\pi_1(C)$ may be calculate via the Seifert-Van Kampen Theorem, by viewing the complement $S^3\smallsetminus\nu(C)$ as the identification of the boundaries of $S^3\smallsetminus\nu(K)$ and a solid torus $D^2\times S^1$ along an essential annulus with core curve given by the slope $\mu^q\lambda^p$. If $\pi_1(D^2\times S^1)=\langle t \rangle$ then this gives rise to a natural amalgamated product
\[\pi_1(C)\cong\pi_1(K)\ast_{\mu^q\lambda^p=t^p}\bZ.\]

Consulting \cite[Section 3]{CW2010}, the meridian and longitude for $C$ may be calculated as \[\mC= \mu^u\lambda^vt^{-v} \quad {\rm and} \quad \lC= \mC^{-pq}t^p\] where $u$ and $v$ are positive integers satisfying $pu-qv=1$ (compare \cite[Proof of Theorem 3.1]{Tsau1988}). 

Suppose that the knot $K$ is $r$-decayed, and choose cabling coefficients $p$ and $q$ so that $q/p >r$.  To begin, we choose a positive cone $\sP\subset\pi_1(C)$ and assume that $\mu_C^{pq} \lambda  = t^p$ is positive.  This means that $t^p  = \mu^q \lambda^p\in\sP$, so every element $\mu^m \lambda^n$ is positive whenever $m/n >r$, since $K$ is $r$-decayed.  %It will be convenient to make use of the fact that the existence of a positive cone is equivalent to a left-invariant strict total order by asserting that $g>1$ whenever $g\in\sP$  \cite{KME96}.

 Our method of proof will be to check that the cable is $pq$-decayed by using the equivalence from Proposition \ref{prp:equiv}. In particular, we will show that for the given positive cone $\sP \subset \pi_1(C)$ there exists an unbounded sequence of increasing  rationals $\{r_i \}$ with $r_0 = pq$, such that our assumption $\alpha_{pq} = \mu_C^{pq} \lambda_C\in\sP$ implies $\alpha_{r_i} \in \sP$ for all $i>0$.

First consider the case when $\mu_C $ is positive in the left-ordering defined by $\sP$. Here, $\mu_C^{pq+N} \lambda_C $ is positive for $N \geq 0$,  as it is a product of positive elements. Therefore in this case it suffices to choose $r_i = pq+i$ for all $i \geq 0$.

For the remainder of the proof, we assume that $\mu_C$ is negative.  For repeated use below, we also observe the crucial identity
\[ (t^{-v})^p (\mu^u \lambda^v)^p = (t^p)^{-v} \mu^{up} \lambda^{vp} = \mu^{-qv} \lambda^{-pv} \mu^{up} \lambda^{vp} = \mu^{pv -qu} = \mu,
\]
and recall that $t^p$ commutes with $\mu, \lambda, \mu_C$, and $\lambda_C$.   Therefore, we also have
\[ (\mu^u \lambda^v)^p (t^{-v})^p= (t^{-v})^p (\mu^u \lambda^v)^p  = \mu.
\]

Let $k$ be an arbitrary non-negative integer, and consider the element
\[ \mu^{-k} (t^{-v} \mu^u \lambda^v) \mu^k.
\] 
If this element is positive for some $k$, then the required sequence is provided by Lemma \ref{lem:lemma1} (proved below).  Therefore, we may assume that 
\begin{equation}\label{option1} 
\mu^{-k} (t^{-v} \mu^u \lambda^v) \mu^k\notin\sP
\end{equation}
for all $k$.

Similarly, for $k$ a non-negative integer, we consider
\[ (\mu^{-k} t^{-v} \mu^k)^{p-1} (\mu^u \lambda^v)^{p-1}.
\]
If this element is positive for some non-negative $k$, then we can create the required sequence using Lemma \ref{lem:lemma2} (proved below).
Therefore, we may assume that 
\begin{equation}\label{option2} (\mu^{-k} t^{-v} \mu^k)^{p-1} (\mu^u \lambda^v)^{p-1} \notin\sP
\end{equation}
for all $k$.

Observe that
\[ (\mu^{-k} t^{-v} \mu^k)^{p-1} (\mu^u \lambda^v)^{p-1}  = (\mu^{-k} t^{v} \mu^k) (\mu^{-k} t^{-vp} \mu^k)  (\mu^{up-u} \lambda^{vp-v}), 
\]
which, recalling that $t^p$ commutes with the elements $\mu$ and $\lambda$, simplifies to give
\[ (\mu^{-k} t^{v} \mu^k) (\mu^{-u} \lambda^{-v}) t^{-vp} \mu^{up} \lambda^{vp} =(\mu^{-k} t^{v} \mu^k) (\mu^{-u} \lambda^{-v}) \mu = \mu^{-k} t^{v} \lambda^{-v} \mu^{-u} \mu^{k+1} \notin\sP
\]
for all $k$. Taking inverses yields
\[ \mu^{-k-1} \mu^u \lambda^v t^{-v} \mu^k = \mu^{-k-1} \mu_C \mu^k \in\sP.
\]
%or $\mu_C \mu^k > \mu^{k+1}$ for all $k$.

%Now, recalling that the existence of a positive cone $\sP$ is equivalent to a left-invariant strict total order by asserting that $g>1$ whenever $g\in\sP$ \cite{KME96}, we observe:

%\begin{lemma}$\mu_C^N > \mu^N$ for all $N \geq 0$.\end{lemma}

For the following lemmas, let $>$ denote the left-ordering defined by the positive cone $\sP$, so that $h > g$ whenever $g^{-1} h \in \sP$.   We can then calculate:

\begin{lemma} If (\ref{option1}) and (\ref{option2}) hold for all $k \geq 0$, then $\mu^{N+q} \lambda^p$ must be positive for all $N \geq 0$.\end{lemma}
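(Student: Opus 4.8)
The plan is to obtain this directly from the standing consequences of $K$ being $r$-decayed, rather than from the failure conditions (\ref{option1}) and (\ref{option2}) themselves. The key structural point is that the amalgamated product $\pi_1(C)\cong\pi_1(K)\ast_{\mu^q\lambda^p=t^p}\bZ$ embeds $\pi_1(K)$ as a factor, so the positive cone $\sP\subset\pi_1(C)$ restricts to a positive cone $\sP_K=\sP\cap\pi_1(K)$ on that factor (the restriction of any left-ordering to a subgroup is again a left-ordering). Since $r$-decay of $K$ is a property quantified over \emph{all} positive cones of $\pi_1(K)$, it applies in particular to $\sP_K$, and the peripheral subgroup $\langle\mu,\lambda\rangle$ lies inside this factor.

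First I would identify $t^p=\mu^q\lambda^p$ with the knot slope $\alpha_{q/p}$; because $q/p>r$ we have $\alpha_{q/p}\in\sS_r$, and by hypothesis $t^p\in\sP$, hence $t^p\in\sP_K$. Thus $\sP_K\cap\sS_r\neq\emptyset$, and $r$-decay of $K$ forces $\sP_K\cap\sS_r=\sS_r$, i.e. $\sS_r\subseteq\sP_K\subseteq\sP$. This recovers exactly the standing observation of the section: every slope $\mu^m\lambda^n$ with $m/n>r$ is positive.

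Next I would simply compute the slope of the target element. For $N\geq 0$ the slope of $\mu^{N+q}\lambda^p$ is $(N+q)/p\geq q/p>r$, so it represents a (possibly non-reduced) element of $\sS_r$. Writing $d=\gcd(N+q,p)$, we have $\mu^{N+q}\lambda^p=\alpha_{(N+q)/p}^{\,d}$, and since $g^{d}\in\sP\iff g\in\sP$ in any left-ordering, the positivity of $\alpha_{(N+q)/p}\in\sS_r\subseteq\sP$ yields $\mu^{N+q}\lambda^p\in\sP$ for every $N\geq 0$, as required.

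I do not expect a genuine obstacle within this lemma itself: its conclusion is immediate from $r$-decay and does not in fact invoke (\ref{option1}) or (\ref{option2}), which serve only to situate the statement in the remaining case of the case-analysis ($\mu_C$ negative, both candidate elements non-positive). The real obstruction is deferred to the later steps. The element $\mu^{N+q}\lambda^p$ lives in $\pi_1(K)$ and not in $\langle\mu_C,\lambda_C\rangle$, so the substantive work is to convert this peripheral positivity into positivity of honest cable slopes $\alpha_{r_i}$ for $C$; that conversion is where the identity $(t^{-v})^p(\mu^u\lambda^v)^p=\mu$, the derived relation $\mu^{-k-1}\mu_C\mu^k\in\sP$, and Lemmas \ref{lem:lemma1}--\ref{lem:lemma2} are genuinely needed to produce the increasing unbounded sequence demanded by Proposition \ref{prp:equiv}.
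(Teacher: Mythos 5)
Your argument is a correct proof of the statement as literally written: positivity of $\mu^{N+q}\lambda^p$ does follow immediately from $t^p=\mu^q\lambda^p\in\sP$, the fact that $\sP\cap\pi_1(K)$ is a positive cone of the factor $\pi_1(K)$ in the amalgam, and $r$-decay of $K$; your handling of the non-primitive case via $\gcd$ is careful, and indeed this is exactly how the paper justifies the final inequality of its own proof. But the paper's proof does not stop there, because the hypotheses (\ref{option1}) and (\ref{option2}) are used to prove something strictly stronger than the stated conclusion. From (\ref{option1}) and (\ref{option2}) the paper derives $\mu^{-k-1}\mu_C\mu^k\in\sP$ for all $k\geq 0$, hence $\mu_C\mu^k>\mu^{k+1}$, and by induction $\mu_C^N>\mu^N$ for all $N\geq 0$; left-multiplying by $t^p$ (which commutes with $\mu$, $\lambda$, $\mu_C$, $\lambda_C$) then gives the chain
\[
\mu_C^{N+pq}\lambda_C=\mu_C^N t^p>\mu^N t^p=\mu^{N+q}\lambda^p>1.
\]
The essential output is the positivity of the \emph{first} term, i.e.\ of the cable slopes $\alpha_{pq+N}$, which is what licenses the sentence immediately after the lemma: ``we may now choose the sequence of rationals $r_i=pq+i$'' and invoke Proposition \ref{prp:equiv}.

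This is where your proposal has a genuine gap --- not in the lemma taken literally, but in your account of its role. You assert that the conversion from peripheral positivity in $\pi_1(K)$ to positivity of honest cable slopes is ``deferred to the later steps,'' in particular to Lemmas \ref{lem:lemma1} and \ref{lem:lemma2}. That is not so: those lemmas handle the complementary cases in which (\ref{option1}) or (\ref{option2}) \emph{fails} for some $k$, while the case at hand (both hold for all $k$) is disposed of entirely by the present lemma, and the conversion happens nowhere except inside its proof, via the inductive comparison $\mu_C^N>\mu^N$. That comparison cannot be bypassed: in this case $\mu_C$ is negative, so $\mu_C^N t^p>1$ is not a formal consequence of $t^p>1$ and $\mu^{N+q}\lambda^p>1$, since a product of a negative and a positive element has no determined sign. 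If your proof were substituted for the paper's, the choice $r_i=pq+i$ following the lemma would be unsupported and the proof of Theorem \ref{thm:cable} would collapse in exactly this case. (In fairness, the mismatch originates with the paper: the lemma ought to be stated with conclusion ``$\mu_C^{N+pq}\lambda_C$ is positive for all $N\geq 0$,'' which is what its proof shows and what is used; your reading exposed that the written statement is weaker than intended, but the fix is to strengthen the statement, not to weaken the proof.)
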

\begin{proof} 
%Recall that the existence of a positive cone $\sP$ is equivalent to a left-invariant strict total order by asserting that $g>1$ whenever $g\in\sP$ \cite{KME96}, 

Since $ \mu^{-k-1} \mu_C \mu^k >1$, left-multiplying by $\mu^{k+1}$ gives $\mu_C \mu^k > \mu^{k+1}$ for all $k \geq 0$.  Setting $k=0$ we obtain $\mu_C > \mu$, so that left-multiplying by $\mu_C$ gives rise to
\[ \mu_C^2 > \mu_C \mu.\]
By setting $k=1$, we get
\[\mu_C \mu > \mu^2,\]
which combines with the previous expression to give $\mu_C^2 > \mu^2$.   Continuing in this manner, we obtain $\mu_C^N > \mu^N$ for all $N \geq 0$.
Left-multiplying by $t^p$, it follows that
\[ \mu_C^{N+pq} \lambda_C = \mu_C^N t^p > \mu^N t^p = \mu^{N+q} \lambda^p >1,
\]
where the final inequality follows from the fact that $(N+q)/p >q/p>r$ and $K$ is $r$-decayed. \end{proof}

As in the first case, we may now choose the sequence of rationals $r_i = pq+i$ for all $i \geq 0$, and the requirements of Proposition \ref{prp:equiv} are met.

To conclude the proof, we establish Lemma \ref{lem:lemma1} and Lemma \ref{lem:lemma2}. 

\begin{lemma}
\label{lem:lemma1}
If $\mu^{-k} (t^{-v} \mu^u \lambda^v) \mu^k \in \sP$ for some $k \geq 0$, then there exists a sequence of rationals $\{ r_i \}$ such that $\alpha_{r_i} >1$ for all $i$.
\end{lemma}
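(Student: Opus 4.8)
The plan is to produce, for the given positive cone $\sP$, a strictly increasing unbounded sequence of rationals $\{r_i\}$ with $r_0 = pq$ all of whose cable slopes $\alpha_{r_i}$ lie in $\sP$, and then invoke Proposition \ref{prp:equiv} (in the form permitted by the subsequent Remark, so that unreduced rationals and positive powers of slopes are allowed). The first step is to put every candidate slope in a normal form: using $\lambda_C = \mu_C^{-pq}t^p$ together with the centrality of $t^p$, the slope associated to the (possibly unreduced) rational $pq + \tfrac{m}{b}$ is $\mu_C^{m}(t^p)^{b}$. Thus it suffices to exhibit integers $m_i,b_i>0$ with $m_i/b_i\to\infty$ for which $\mu_C^{m_i}(t^p)^{b_i}\in\sP$; the sequence $r_i = pq + m_i/b_i$ then does the job.

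Next I would record the positive elements already available in this case. First, $t^p=\mu^q\lambda^p\in\sP$. Second, since $pu-qv=1$ gives $u/v = q/p + 1/(pv) > q/p > r$, the decay hypothesis on $K$ yields $\mu^u\lambda^v\in\sP$, and more generally $\mu^M\lambda^L\in\sP$ whenever $M/L>r$. The hypothesis of the lemma supplies a third positive element, the conjugate $\mu^{-k}(t^{-v}\mu^u\lambda^v)\mu^k = \mu^{-k}t^{-v}\mu_C t^{v}\mu^k$ of $\mu_C$, for some fixed $k\geq 0$; note that $\mu_C$ itself is negative, so this genuinely exploits that conjugation need not preserve sign.

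The heart of the argument is to convert positivity of this one conjugate into an inequality that can be iterated. Mirroring the telescoping used just above for the preceding lemma, I would translate $\mu^{-k}(t^{-v}\mu^u\lambda^v)\mu^k>1$ by left-invariance into a comparison of the form $\mu_C\cdot(\text{word in }\mu) > (\text{word in }\mu)$, and then use the crucial identity $(t^{-v})^p(\mu^u\lambda^v)^p=\mu$ (equivalently $(\mu^u\lambda^v)^p(t^{-v})^p=\mu$) to replace the conjugated expression by an honest, un-conjugated power of $\mu_C$. Bootstrapping over the relevant exponents should produce a domination of the form $\mu_C^{\,m}\succ\mu^{\,M}$ with $m,M$ growing without bound; multiplying through by the central positive $t^p$ and comparing against $\mu^{M+q}\lambda^p$, whose positivity is guaranteed by $r$-decay once $(M+q)/p>r$, then forces the normal-form slopes $\mu_C^{m}(t^p)^{b}$ into $\sP$.

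Finally I would assemble the exponents into the required sequence — most plausibly a cofinal arithmetic progression such as $r_i = pq+i$ once the domination is in hand — verify that it is strictly increasing and diverges, and apply Proposition \ref{prp:equiv} with its Remark to conclude. The main obstacle I anticipate is precisely the middle step: because left-orderings are not conjugation-invariant, the positive conjugate of the negative meridian $\mu_C$ cannot be converted into positivity of $\mu_C$ directly. Overcoming this requires the specific cable-space relations — above all $(t^{-v})^p(\mu^u\lambda^v)^p=\mu$ and the centrality of the fiber $t^p$ — to telescope shifted copies of the hypothesis element into a single word whose $\mu_C$-growth outpaces its negative contributions, so that the $r$-decay of $K$ can be brought to bear on arbitrarily large slopes.
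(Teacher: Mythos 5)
Your framing---the normal form $\mu_C^{m}(t^{p})^{b}$ for the slope $pq+m/b$, the inventory of positive peripheral elements supplied by $r$-decay, multiplication by the central $t^{ps}$, and the appeal to Proposition \ref{prp:equiv} together with its Remark---matches the paper, but the step you yourself call the heart of the argument is a genuine gap, and the move you propose there would fail. The hypothesis, iterated, gives only that
\[
\mu^{-k}(t^{-v}\mu^u\lambda^v)^{N}\mu^{k}=(t^{v}\mu^{k})^{-1}\mu_C^{N}(t^{v}\mu^{k})
\]
is positive for all $N\ge 1$, i.e.\ $\mu_C^{N}(t^{v}\mu^{k})>t^{v}\mu^{k}$. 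Because a left-ordering is not right-invariant, you cannot cancel the word $t^{v}\mu^{k}$ on the right, so no comparison of $\mu_C^{N}$ against powers of $\mu$ follows. The domination $\mu_C^{m}\succ\mu^{M}$ with both exponents growing, which you plan to ``bootstrap,'' is precisely the conclusion of the \emph{complementary} case in the paper (the unnumbered lemma), where it is obtained by telescoping $\mu^{-k-1}\mu_C\mu^{k}\in\sP$ over \emph{all} $k\ge 0$; likewise the identity $(t^{-v})^{p}(\mu^u\lambda^v)^{p}=\mu$ is used to produce that hypothesis from the failure of (\ref{option2}), and it plays no role in Lemma \ref{lem:lemma1}. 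With positivity of the conjugate for only a single $k$, no such telescoping is available, and your sketch supplies no substitute for it.

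The paper's proof never de-conjugates. It keeps the positive conjugate intact as the middle factor of the exact identity
\[
\mu_C^{N}=\mu^{u+k}\lambda^{v}\,\bigl(\mu^{-k}(t^{-v}\mu^u\lambda^v)^{N}\mu^{k}\bigr)\,\mu^{-u-k}\lambda^{-v},
\]
and absorbs the negative tail $\mu^{-u-k}\lambda^{-v}$ by multiplying with $t^{ps}=\mu^{sq}\lambda^{sp}$, where $s$ is \emph{fixed}, chosen large enough that $(sq-u-k)/(sp-v)>r$. Then $\mu_C^{N+pqs}\lambda_C^{s}=\mu_C^{N}t^{ps}$ equals $\mu^{u+k}\lambda^{v}\cdot\bigl(\mu^{-k}(t^{-v}\mu^u\lambda^v)^{N}\mu^{k}\bigr)\cdot\mu^{sq-u-k}\lambda^{sp-v}$, a product of three positive elements (the outer two positive by $r$-decay since $(u+k)/v>q/p>r$ and $(sq-u-k)/(sp-v)>r$, the middle one by hypothesis). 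Consequently the correct sequence is $r_i=(pqs+i)/s$, with denominator $s$, not your arithmetic progression $pq+i$, which would only be justified by the unavailable domination $\mu_C^{N}\succ\mu^{N}$. Put differently, what your hypothesis actually yields is that $\mu_C^{N}$ dominates the single fixed negative element $t^{-ps}$ for every $N$, and that suffices; asking it to dominate the growing powers $\mu^{N}$ is asking for more than can be justified.
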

\begin{proof}
For $N \geq 0 $, we rewrite $\mu_C^N$ as
\[ \mu_C^N = \mu^u \lambda^v \mu^k (\mu^{-k} (t^{-v} \mu^u \lambda^v)^N \mu^k) \mu^{-u-k} \lambda^{-v}.
\]
Fix a positive integer $s$ that is large enough so that $(sq-u-k)/(sp-v) > r$, this is possible because $q/p>r$.   Next, the product $\mu_C^{N+pqs} \lambda_C^s = \mu_C^N t^{ps}$ becomes $\mu_C^N \mu^{sq} \lambda^{sp}$, which is equal to
\[ \mu^{u+k} \lambda^v (\mu^{-k} (t^{-v} \mu^u \lambda^v)^N \mu^k) (\mu^{qs-u-k} \lambda^{ps-v}).
\]
This is a product of positive elements, because:
\begin{enumerate}
\item $\mu^{u+k} \lambda^v >1$ because $(u+k)/v > q/p>r$, and 
\item $\mu^{qs-u-k} \lambda^{ps-v} >1$ because $(sq-u-k)/(sp-v) > r$,
\end{enumerate}
while the quantity $\mu^{-k} (t^{-v} \mu^u \lambda^v)^N \mu^k$ is positive by assumption.
Therefore, in this case we choose our sequence of rationals to be \[r_i = \frac{pqs+i}{s}\] for $i \geq 0$, this guarantees that the associated slopes $\alpha_{r_i}$ are positive in the given left-ordering.
\end{proof}

\begin{lemma}
\label{lem:lemma2}
If $\mu^{-k} (t^{-v} \mu^u \lambda^v) \mu^k \notin \sP$ for all $k \geq 0$, and $ (\mu^{-k} t^{-v} \mu^k)^{p-1} (\mu^u \lambda^v)^{p-1}  \in \sP$ for some $k \geq 0$, then there exists a sequence of rationals $\{ r_i \}$ such that $\alpha_{r_i} >1$ for all $i$.
\end{lemma}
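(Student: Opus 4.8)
The plan is to mirror the proof of Lemma~\ref{lem:lemma1} and to verify the hypotheses of Proposition~\ref{prp:equiv}. Working in the setting of the proof (so $\mu_C<1$ and, by the standing assumption, $t^p=\alpha_{pq}\in\sP$), I must produce, for the fixed positive cone $\sP$, an increasing sequence $\{r_i\}$ with $r_0=pq$, $r_i\to\infty$, and $\alpha_{r_i}\in\sP$. As in Lemma~\ref{lem:lemma1}, I realize the candidate slopes as $\mu_C^{N}t^{ps}$: using $\lambda_C=\mu_C^{-pq}t^p$ and the centrality of $t^p$ we have $\mu_C^{N+pqs}\lambda_C^{s}=\mu_C^N t^{ps}$, so it suffices to exhibit each such element as a product of members of $\sP$, after which $r_i=\frac{pqs+i}{s}$ meets conditions (1)--(2).

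To organize the algebra, write $A=\mu^u\lambda^v$ and $c_k=\mu^{-k}t^{-v}\mu^k$, so the two hypothesis elements are $g_k=c_kA=\mu^{-k}(t^{-v}\mu^u\lambda^v)\mu^k$ and $h_k=c_k^{p-1}A^{p-1}$. Conjugating the crucial identity $(t^{-v})^p(\mu^u\lambda^v)^p=\mu$ by $\mu^k$ yields $c_k^pA^p=\mu$, hence the factorization $c_k\,h_k\,A=\mu$ linking the two hypothesis elements. Three sign facts are then in hand: $A=\alpha_{u/v}\in\sP$, since $u/v=q/p+\tfrac1{pv}>r$ and $K$ is $r$-decayed; hypothesis (\ref{option1}) gives $g_k\notin\sP$, hence $g_k^{-1}=A^{-1}c_k^{-1}\in\sP$, for every $k$; and the second hypothesis gives $h_{k_0}\in\sP$ for some fixed $k_0$. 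The plan is to rewrite $\mu_C^N t^{ps}$ (with $s$ chosen large) as a product whose only factors are a positive power of $h_{k_0}$, finitely many copies of the positive elements $A$ and $g_{k_0}^{-1}$, and peripheral words $\mu^a\lambda^b$ of $K$ whose slope $a/b$ exceeds $r$ --- using $g_{k_0}^{-1}=A^{-1}c_{k_0}^{-1}$ to reorder interleaved $c_{k_0}$ and $A$ factors and $c_{k_0}h_{k_0}A=\mu$ to absorb them into powers of $\mu$, which $r$-decay then converts into positive peripheral slopes.

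The main obstacle is a direction mismatch absent from Lemma~\ref{lem:lemma1}. There $\mu_C$ is literally an $A$-conjugate of the positive element $g_k$, so $\mu_C^N$ immediately displays the positive power $g_k^{N}$ and increasing slopes come for free. Here, by contrast, the simplification carried out just before the lemmas shows $h_{k}=\mu^{-k}\mu_C^{-1}\mu^{k+1}$, so the positive element $h_{k_0}$ controls \emph{negative} powers of $\mu_C$ --- precisely the decreasing slopes --- whereas the elements I must certify positive, $\mu_C^{N}t^{ps}$ with $N\to\infty$, involve the \emph{positive} powers of $\mu_C$. Reconciling this is exactly where hypothesis (\ref{option1}) and the central factor $t^{ps}=\mu^{qs}\lambda^{ps}$ must be brought to bear in tandem with $h_{k_0}$: the factor $t^{ps}$ serves to absorb $\mu_C$-powers into genuinely peripheral words of $K$ (whose sign is governed by $r$-decay), while $g_{k_0}^{-1}\in\sP$ provides the reorderings that make $c_{k_0}h_{k_0}A=\mu$ applicable. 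The delicate point --- and the step I expect to be hardest --- is to perform this reordering so that, for a single choice of $s$ and for every $N\ge0$, no negative factor survives; checking that all the peripheral slopes so produced exceed $r$ is the analogue of conditions (1)--(2) in Lemma~\ref{lem:lemma1} and is the crux of the argument.
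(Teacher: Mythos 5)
Your setup is sound and matches the paper's framing exactly: reduce to exhibiting $\mu_C^N t^{p\cdot(\mathrm{const})}$, for a single fixed constant and all $N\geq 0$, as a product of elements of $\sP$, then read off an increasing unbounded sequence $r_i$ starting at $pq$ and invoke Proposition \ref{prp:equiv}. Your sign facts ($A=\mu^u\lambda^v\in\sP$, $g_k^{-1}\in\sP$ for all $k$, $h_{k_0}\in\sP$), the identity $c_k^pA^p=\mu$, and the observation that $h_{k_0}=\mu^{-k_0}\mu_C^{-1}\mu^{k_0+1}$ points the ``wrong way'' are all correct. But the proof stops exactly where the lemma begins: you label the rewriting of $\mu_C^Nt^{ps}$ as the step you ``expect to be hardest'' and never carry it out, and the mechanism you sketch for it does not work as described. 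The identity $\mu=c_{k_0}h_{k_0}A$ cannot certify signs, because $c_{k_0}$ is \emph{negative} (its $p$-th power is the central element $t^{-pv}<1$), so this expresses $\mu$ as a product containing a negative factor; moreover the sign of $\mu$ itself is never available, since the slope $1/0$ does not lie in $\sS_r$ and $r$-decay says nothing about it. Nothing in your toolkit --- positive powers of $h_{k_0}$ (which track $\mu_C^{-1}$), $A$, $g_{k_0}^{-1}$, and peripheral words of slope $>r$ --- visibly produces positive powers of $\mu_C$, and you offer no candidate decomposition that one could check.

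The missing idea in the paper's proof is an intermediate-value (sign-flip) argument on the exponent. Since $c_kA<1$ by hypothesis (\ref{option1}) while $c_k^{p-1}A^{p-1}>1$ by the second hypothesis, there is a least $n$ with $2\leq n\leq p-1$ such that $c_k^nA^n>1$ and $c_k^{n-1}A^{n-1}<1$. Using that $\mu,\lambda$ commute and $t^p$ is central, these rearrange into two elements of $\sP$, namely $\mu^{-k}t^{-vn}(\mu^u\lambda^v)^n\mu^k$ and $\mu^{-k}(\mu^u\lambda^v)^{1-n}t^{v(n-1)}\mu^k$, whose product
\[ P \;=\; \mu^{-k}\,t^{-vn}\,(\mu^u\lambda^v)\,t^{v(n-1)}\,\mu^k \;\in\; \sP \]
is a controlled conjugate of $t^{-v}\mu^u\lambda^v=t^{-v}\mu_C t^{v}$; this is precisely what converts the hypotheses into something whose powers track \emph{positive} powers of $\mu_C$. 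One then verifies the factorization
\[ \mu_C^N\, t^{p(v+s)} \;=\; \mu^{u+k}\lambda^v\;(\mu^{-k}t^{v(n-1)}\mu^k)\;P^{N-1}\;\mu^{qs-k}\lambda^{ps}\;t^{v(p-n)}, \]
in which every factor lies in $\sP$ once $s$ is chosen with $(qs-k)/ps>r$: the peripheral words have slope exceeding $r$, the bracketed $t$-conjugates and $t^{v(p-n)}$ are positive because their $p$-th powers are positive powers of the central element $t^p$ (here $2\leq n\leq p-1$ is used on both ends), and $P^{N-1}\in\sP$ for $N\geq 1$. Taking $r_i=\frac{i+pq(v+s)}{v+s}$ finishes the argument. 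This minimality trick and the resulting factorization, not any reordering via $g_{k_0}^{-1}$, constitute the content of the lemma; without them your proposal has a genuine gap.
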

\begin{proof}
Fix $k \geq 0$ such that $\mu^{-k} (t^{-v} \mu^u \lambda^v) \mu^k<1$ and $ (\mu^{-k} t^{-v} \mu^k)^{p-1} (\mu^u \lambda^v)^{p-1} >1$, and
let $n$ be the smallest positive integer such that 
\[ (\mu^{-k} t^{-v} \mu^k)^{n} (\mu^u \lambda^v)^{n} >1,
\]
and
\[ (\mu^{-k} t^{-v} \mu^k)^{n-1} (\mu^u \lambda^v)^{n-1} <1,
\]
note that $1<n\leq q-1$.
Note that we may rearrange these two expressions, so that
\[ \mu^{-k} t^{-vn}  (\mu^u \lambda^v)^{n} \mu^k>1,
\]
and 
\[ \mu^{-k}  (\mu^u \lambda^v)^{1-n}  t^{-v(1-n)} \mu^k >1.
\] 
Then we can rewrite $\mu_C^N$ for $N \geq 1$ as follows:
\[ \mu_C^N = \mu^{u+k} \lambda^v (\mu^{-k} t^{-v(1-n)} \mu^k)[ (\mu^{-k} t^{-vn}  (\mu^u \lambda^v)^{n} \mu^k) \mu^{-k}  (\mu^u \lambda^v)^{1-n}  t^{-v(1-n)} \mu^k ]^{N-1} \mu^{-k} t^{-vn}.
\]
In the above expression, the quantity inside the square brackets is a product of positive elements.   Denote this quantity by P.  Choose an integer $s$ such that $(qs-k)/ps > r$.  Then considering the slope $\mu_C^{N+pq(v+s)} \lambda_C^{v+s} = \mu_C^N t^{p(v+s)}$, we find
\[ \mu_C^N t^{p(v+s)} = \mu^{u+k} \lambda^v (\mu^{-k} t^{-v(1-n)} \mu^k)P^{N-1} \mu^{qs-k}\lambda^{ps} t^{pv-vn} .
\]
This is a product of positive elements, because:
\begin{enumerate}
\item $\mu^{u+k} \lambda^v>1$, since $(u+k)/v > q/p > r$.
\item $\mu^{-k} t^{-v(1-n)}  \mu^k>1$, because if we consider its $p$-th power, we can use the fact that $t^p$ commutes with all peripheral elements so that
\[ (\mu^{-k} t^{-v(1-n)}  \mu^k)^p = t^{-pv(1-n)} >1.
\]
The final inequality follows from $-pv(1-n)>0$.
\item $\mu^{qs-k}\lambda^{ps} >1$, because $s$ is chosen so that $(qs-k)/ps > r$.  
\item $t^{pv-vn}>1$, because $pv-vn>0$.
\end{enumerate}
Therefore, in this case we may choose our sequence of rationals to be \[ r_i = \frac{i+pq(v+s)}{v+s} \] for $i \geq 0$,  as the corresponding elements $\mu_C^{i+pq(v+s)} \lambda_C^{v+s} $ are positive in the left-ordering for $i \geq 0$.
\end{proof}

\section{Surgery on satellites}\label{sec:satellite}

%satellite-alt.tex

Let $T$ denote the solid torus containing a knot $K^P$, we require that $K^P$ is not contained in any $3$-ball inside $T$.  The knot $K^P$ will be called the pattern knot.    Let $K^C$ denote a knot in $S^3$, $K^C$ will be called the companion knot.   We construct the satellite knot $K$ with pattern $K^P$ and companion $K^C$ as follows.   

Let $h : \partial T \rightarrow \partial ( S^3 \smallsetminus \nu(K^C))$ denote a diffeomorphism from the boundary of $T$ to the boundary of the complement of $\nu(K^C)$, which carries the longitude of $\partial T$ onto the longitude of the knot $K^C$.  The knot $K$ is then realized as the image of the knot $K^P$ in the manifold
\[  S^3 \smallsetminus \nu(K^C) \sqcup_{h} T =S^3.
\]

\begin{lemma} \cite[Proposition 3.4]{SW06}
\label{lem:homo}
There exists a homomorphism $\phi: \pi_1(K) \rightarrow \pi_1( K^P)$
that perserves peripheral structure.
\end{lemma}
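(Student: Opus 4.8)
The plan is to construct the homomorphism $\phi\co \pi_1(K)\to\pi_1(K^P)$ directly from the satellite decomposition and then verify that it respects the meridian-longitude pair on the boundary. First I would fix the relevant fundamental groups. The complement of the pattern, $S^3\smallsetminus\nu(K^P)$, sits inside the solid torus $T$, so there is a natural inclusion-induced map from $\pi_1(T\smallsetminus\nu(K^P))$; in fact $\pi_1(K^P)$ is by definition the fundamental group of $S^3\smallsetminus\nu(K^P)$, where $K^P$ is viewed in $S^3$ by filling in the complementary solid torus to $T$ in the standard (unknotted) way. The knot complement $S^3\smallsetminus\nu(K)$ decomposes as the union of $T\smallsetminus\nu(K^P)$ and $S^3\smallsetminus\nu(K^C)$ glued along the torus $\partial T$, which yields via Seifert-Van Kampen the amalgamated product
\[
\pi_1(K)\cong \pi_1(T\smallsetminus\nu(K^P))\ast_{\pi_1(\partial T)}\pi_1(S^3\smallsetminus\nu(K^C)).
\]

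Next I would define $\phi$ on each factor of this amalgamated product and check compatibility on the amalgamating subgroup, so that the universal property of the pushout furnishes a well-defined homomorphism. On the first factor $\pi_1(T\smallsetminus\nu(K^P))$ I would use the inclusion into $\pi_1(K^P)=\pi_1(S^3\smallsetminus\nu(K^P))$ induced by capping off $\partial T$ with the standard solid torus. On the second factor $\pi_1(S^3\smallsetminus\nu(K^C))$ I would send everything into the peripheral subgroup $\pi_1(\partial T)\subset\pi_1(K^P)$: the point is that after capping off, the image of $\partial T$ bounds a solid torus, and the longitude of $\partial T$ (which $h$ carries to the companion longitude) becomes null-homotopic in this filling, so the companion contributes nothing beyond its peripheral data. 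Concretely, one factors the companion group through its abelianization restricted to the boundary, sending a companion meridian to the corresponding generator and killing the rest; the key is that this is consistent with the inclusion of $\pi_1(\partial T)$ coming from the pattern side.

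To make $\phi$ well-defined I must check that the two definitions agree on the amalgamating subgroup $\pi_1(\partial T)\cong\bZ\oplus\bZ$, generated by the longitude $\ell$ and meridian $m$ of $T$. The longitude $\ell$ is identified by $h$ with the companion longitude, and after the standard filling it bounds a disk on the pattern side; the meridian $m$ maps to the pattern meridian. Verifying that both the pattern-side inclusion and the companion-side map agree on these two generators is the technical core, and this is exactly where the \emph{unknotted} standard filling of $T$ and the longitude-preserving property of $h$ are used. Finally I would confirm that $\phi$ preserves peripheral structure in the sense claimed: the meridian and longitude of $K$, which lie in the pattern-side factor $\pi_1(T\smallsetminus\nu(K^P))$ on $\partial\nu(K)=\partial\nu(K^P)$, map under the inclusion to the meridian and longitude of $K^P$. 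The main obstacle I anticipate is the bookkeeping around longitudes: the Seifert longitude of $K$ and that of $K^P$ differ by a framing correction coming from the companion (the writhe/linking contribution), so I expect to have to track this framing carefully and confirm that $\phi$ still carries the peripheral $\bZ\oplus\bZ$ of $K$ into that of $K^P$ as asserted, even if it does not match longitudes on the nose. For the precise statement and the detailed framing computation I would appeal to \cite[Proposition 3.4]{SW06}.
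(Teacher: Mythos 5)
Your construction is essentially the paper's: the paper defines $\phi$ as the quotient of $\pi_1(K)$ by the normal closure $N$ of the commutator subgroup of $\pi_1(K^C)$, which abelianizes the companion factor of the amalgamated product and, topologically, replaces the companion exterior by the standard filling solid torus $T'$, identifying $\pi_1(K)/N \cong \pi_1(K^P)$. Your piecewise definition via the universal property of the pushout (inclusion on $\pi_1(T\smallsetminus\nu(K^P))$, abelianization-then-inclusion on $\pi_1(K^C)$) produces exactly this same homomorphism, so the two proofs differ only in packaging.

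The one point worth settling is the ``main obstacle'' you anticipate at the end: there is no framing correction, and you do not need to defer this to \cite{SW06}. Since $h$ carries the longitude of $\partial T$ to the Seifert longitude of $K^C$, that curve is null-homologous in the companion exterior, exactly as it is null-homotopic in the standard filling $T'$. Consequently, for any curve $\gamma$ on $\partial\nu(K^P)=\partial\nu(K)$, its class in $H_1(S^3\smallsetminus\nu(K))$ and its class in $H_1(S^3\smallsetminus\nu(K^P))$ are computed from $H_1(T\smallsetminus\nu(K^P))$ by killing the same element (the longitude of $\partial T$), so the Seifert longitudes of $K$ and $K^P$ are literally the same curve on the common boundary torus, and $\phi$ carries meridian to meridian and longitude to longitude on the nose. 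This exactness is in fact what the paper needs downstream: Proposition \ref{prop:satellitelo} requires $\phi$ to send the slope $\alpha_r$ of $K$ to the slope $\alpha_r$ of $K^P$ so that the induced map $\phi_r$ on surgery quotients exists. One small slip in your consistency check: the meridian of $T$ does not map to the meridian of $K^P$ --- it maps to the class represented by the core of the filling torus $T'$; what matters for well-definedness is only that both factor maps agree on it, which they do.
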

\begin{proof}
We can compute the fundamental group $\pi_1(K )$ by using the Seifert-Van Kampen theorem.   Since 
\[ S^3 \smallsetminus \nu(K) = S^3 \smallsetminus \nu(K^C) \sqcup_{h} T \smallsetminus \nu (K^P) ,
\]
the group $\pi_1(K)$ is the free product $\pi_1(K^C) * 
\pi_1( T \smallsetminus \nu(K^P) ) $, with amalgamation as follows:  The meridian of $K^C$ is identified with the meridian of $T$, and the longitude of $K^C$ is identified with the longitude of $T$.

Let $N$ denote the normal closure in $\pi_1(K)$ of the commutator subgroup of $\pi_1(K^C)$.    The quotient $\pi_1(K) /N$ can be considered as the result of killing the longitude of $T$.  Topologically we can think of this quotient as gluing a second solid torus $T'$ to the torus $T$ containing $K^P$, in such a way that the meridian of $T'$ is glued to the longitude of $T$.  The result is that $\pi_1(K^C)$ collapses to a single infinite cyclic subgroup, and the group $\pi_1(K) /N$ is isomorphic to $\pi_1(K^P)$.  The desired homomorphism $\phi$ is the quotient map $\pi_1(K ) \rightarrow \pi_1(K) /N$. 
\end{proof}

\begin{proposition}
\label{prop:satellitelo}
Suppose that $K$ is a satellite knot with pattern knot $K^P$, and $r \in \mathbb{Q}$ is any rational number.  If $\pi_1(S^3_{r}(K^P))$ is left-orderable and $S^3_{r}(K)$ is irreducible, then $\pi_1(S^3_{r}(K))$ is left-orderable.
\end{proposition}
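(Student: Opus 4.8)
The plan is to promote the peripheral-structure-preserving homomorphism $\phi\colon\pi_1(K)\to\pi_1(K^P)$ of Lemma~\ref{lem:homo} to the surgered groups, and then to manufacture a left-ordering of $\pi_1(S^3_r(K))$ out of a given one on $\pi_1(S^3_r(K^P))$. Because $\phi$ preserves peripheral structure, it carries the meridian and longitude of $K$ to those of $K^P$, hence the surgery slope $\alpha_r=\mu^p\lambda^q\in\pi_1(K)$ to the slope $\alpha_r\in\pi_1(K^P)$ (this is precisely why the same coefficient $r$ appears for both knots). In particular $\phi\bigl(\langle\langle\alpha_r\rangle\rangle\bigr)\subseteq\langle\langle\alpha_r\rangle\rangle$, so $\phi$ descends to a homomorphism
\[ \bar\phi\colon \pi_1(S^3_r(K))\longrightarrow \pi_1(S^3_r(K^P)), \]
which is surjective since $\phi$ is. This yields a short exact sequence
\[ 1 \rightarrow \bar N \stackrel{i}{\rightarrow} \pi_1(S^3_r(K)) \stackrel{\bar\phi}{\longrightarrow} \pi_1(S^3_r(K^P)) \rightarrow 1, \]
where $\bar N=\ker\bar\phi$ is the image of $N=\langle\langle[\pi_1(K^C),\pi_1(K^C)]\rangle\rangle$. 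Since $\pi_1(S^3_r(K^P))$ is left-orderable by hypothesis, the lexicographic extension construction already used in the proof of Theorem~\ref{thm:decay} — setting $\sP=i(\sQ')\sqcup\bar\phi^{-1}(\sQ)$ for a positive cone $\sQ'$ on $\bar N$ and a positive cone $\sQ$ on $\pi_1(S^3_r(K^P))$ — produces a positive cone on $\pi_1(S^3_r(K))$ as soon as $\bar N$ is itself left-orderable. Thus the proposition reduces to showing that $\bar N$ is left-orderable.

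To analyze $\bar N$ I would pass to topology. Decomposing along the companion torus $\partial T$ writes $S^3_r(K)=M_C\cup_{\partial T}W$, where $M_C=S^3\smallsetminus\nu(K^C)$ is the companion complement and $W$ is the result of $r$-surgery on the pattern inside the solid torus $T$; the map inducing $\bar\phi$ is the one that unknots the companion, collapsing $M_C$ onto the complementary solid torus appearing in $S^3_r(K^P)$. The hypothesis that $S^3_r(K)$ is irreducible forces $W$ to be irreducible (a sphere in $W$ bounds a ball in $S^3_r(K)$, and this ball cannot engulf the knot complement $M_C$), and since $W$ has non-empty torus boundary it has positive first Betti number, so $\pi_1(W)$ is left-orderable by \cite[Theorem 1.1]{BRW2005}; likewise $\pi_1(M_C)$ is left-orderable, being a knot-complement group. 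The role of irreducibility is precisely to secure left-orderability, and in particular torsion-freeness, of these pieces via \cite{BRW2005}.

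The main obstacle is the final step: assembling a left-ordering of $\bar N$ from orderings of the pieces. Applying Bass--Serre theory to the action of $\bar N$ on the tree of the splitting $\pi_1(M_C)\ast_{\bZ\oplus\bZ}\pi_1(W)$ exhibits $\bar N$ as the fundamental group of a graph of groups whose vertex groups are conjugates of $[\pi_1(K^C),\pi_1(K^C)]$ (left-orderable as subgroups of the left-orderable group $\pi_1(K^C)$) and of $\ker\bigl(\pi_1(W)\to\pi_1(S^3_r(K^P))\bigr)$ (left-orderable as a subgroup of $\pi_1(W)$), with infinite-cyclic edge groups carried by conjugates of the null-homologous companion longitude $\lambda_C$. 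Equivalently, $\bar N$ is the fundamental group of the open, irreducible cover of $S^3_r(K)$ determined by $\bar\phi$. The delicate point is to left-order this assembly \emph{compatibly}, so that the chosen orderings on adjacent vertex groups agree along their common edge group. I would handle this by exploiting that each edge group is infinite cyclic, so that the two one-sided orderings can be matched along it and a left-ordering of $\bar N$ built up over the graph of groups; making this compatibility precise — rather than the homomorphism bookkeeping of the first two paragraphs — is where the real work lies.
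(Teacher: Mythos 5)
Your reduction in the first paragraph is sound as group theory (an extension of a left-orderable group by a left-orderable kernel is left-orderable), but the proof then stalls exactly where you admit it does, and this is a genuine gap, not a technicality. Left-orderability does \emph{not} pass to graphs of groups in general: an amalgamated product of two left-orderable groups over an infinite cyclic subgroup need not be left-orderable, because the orderings of the vertex groups must be chosen compatibly along every edge, and such compatible families need not exist (this is precisely the content of the Bludov--Glass criteria for ordering amalgams). Your sketch of the Bass--Serre decomposition of $\bar N$ produces infinitely many vertex groups and infinitely many $\bZ$ edge groups, and gives no mechanism for producing a coherent family of orderings on them. So the proposal, as written, is not a proof and would require substantial new ideas to complete along that route.

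What you are missing is that the irreducibility hypothesis on $S^3_r(K)$ is in the statement precisely so that one can invoke the full strength of \cite[Theorem 1.1]{BRW2005}: for a compact, connected, orientable, irreducible $3$-manifold $M$, the group $\pi_1(M)$ is left-orderable if and only if it admits a \emph{nontrivial} homomorphism to a left-orderable group. Once Lemma \ref{lem:homo} gives the peripheral-structure-preserving map $\phi\co\pi_1(K)\to\pi_1(K^P)$, it induces $\phi_r\co\pi_1(S^3_r(K))\to\pi_1(S^3_r(K^P))$ exactly as in your first paragraph; this map is surjective, hence nontrivial since a left-orderable group is nontrivial, and the criterion applies at once. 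That is the paper's entire proof --- no analysis of the kernel is needed. Your alternative use of irreducibility (to make the surgered solid-torus piece $W$ irreducible and order the pieces separately) bypasses the one theorem that makes the statement a one-line consequence of Lemma \ref{lem:homo}, and replaces it with an orderability problem that is at least as hard as what was to be proved.
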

\begin{proof}
By Lemma \ref{lem:homo}, there exists a homomorphism $\phi \co \pi_1(K) \rightarrow \pi_1( K^P)$ that preserves peripheral structure,  so there exists an induced map
\[ \phi_{r} : \pi_1(S^3_{r} (K) ) \rightarrow \pi_1(S^3_{r} (K^P) )
\]
for every $r \in \mathbb{Q}$.
Whenever $\pi_1(S^3_{r} (K^P) )$ is left-orderable the image of $\phi_{r}$ is nontrivial and $\pi_1(S^3_{r} (K) )$ is left-orderable \cite[Theorem 1.1]{BRW2005} .
\end{proof}

\begin{proof}[Proof of Theorem \ref{thm:locable}]
By \cite{Gordon83}, $pq$-surgery on a $(p,q)$-cable knot yields a reducible manifold.   Since the minimal geometric intersection number between reducible slopes is $\pm 1$ \cite{GL96}, $r$-surgery on a $(p,q)$-cable yields an irreducible manifold whenever $r<pq-p-q$.  Moreover, a $(p,q)$-cable knot can be described as a satellite knot with pattern knot $T_{p,q}$, the $(p,q)$-torus knot.  Therefore, for $r<pq-p-q$ we can apply Proposition \ref{prop:satellitelo} to conclude that $\pi_1(S^3_{r}(K))$ will be left-orderable whenever $\pi_1(S^3_r (T_{p,q}))$ is left-orderable.  

We may now combine known results for surgery on torus knots in this setting. On the one hand, $\pi_1(S^3_r (T_{p,q}))$ is an L-space whenever $r\ge 2g-1$ \cite[Proposition 9.5]{OSz-rational} (see in particular \cite[Lemma 2.13]{Hedden2009}), where $g=g(T_{p,q})$ is the Seifert genus given by $g(T_{p,q})= \frac{1}{2}(p-1)(q-1)$. On the other, since $S^3_r (T_{p,q})$ is Seifert fibred or a connect sum of lens spaces for every $r$ \cite{Moser1971},  $S^3_r (T_{p,q})$ is an L-space if and only if $\pi_1(S^3_r (T_{p,q}))$ is not left-orderable \cite{BGW2010} (see also \cite{Peters2009,Watson2009}). In particular,  $\pi_1(S^3_r (T_{p,q}))$ is left-orderable whenever $r$ is less than $2g(T_{p,q})-1$ and the result follows.
\end{proof}

\bibliographystyle{plain}

\bibliography{cables}

\end{document}